\definecolor{webgreen}{rgb}{0,.5,0}
\definecolor{webbrown}{rgb}{.6,0,0}
\DeclareMathOperator{\GL}{GL}
\DeclareMathOperator{\SL}{SL}
\DeclareMathOperator{\R}{\mathbb{R}}
\DeclareMathOperator{\C}{\mathbb{C}}
\DeclareMathOperator{\Z}{\mathbb{Z}}
\DeclareMathOperator{\re}{Re}
\DeclareMathOperator{\im}{Im}
\DeclareMathOperator{\PGL}{PGL}
\journal{Journal of Number Theory}
\begin{document}

\theoremstyle{plain}
\newtheorem{theorem}{Theorem}
\newtheorem{corollary}[theorem]{Corollary}
\newtheorem{lemma}[theorem]{Lemma}
\newtheorem{proposition}[theorem]{Proposition}

\theoremstyle{definition}
\newtheorem{definition}[theorem]{Definition}
\newtheorem{example}[theorem]{Example}
\newtheorem{conjecture}[theorem]{Conjecture}

\theoremstyle{remark}
\newtheorem{remark}[theorem]{Remark}

\begin{frontmatter}

\title{Subconvexity for $\GL_{3}(\R)$ $L$-Functions:\\
The Key Identity via Integral Representations}
\author{Raphael Schumacher}
\address{Department of Mathematics, ETH Zurich, R\"amistrasse 101, 
8092 Zurich, Switzerland}
\ead[E-mail address]{raphael.schumacher@math.ethz.ch}

\begin{abstract}
We study the subconvexity problem for $\GL_{3}(\R)$ $L$-functions in the t-aspect using integral representations by 
combining techniques employed by Michel--Venkatesh in their study of the corresponding problem for $\GL_{2}$ with ideas from recent works of Munshi, Holowinsky--Nelson and Lin. Our main objective is to give -- from the perspective of integral representations of $L$-functions and automorphic representation theory -- a possible explanation of the origin of the ``key identity'' arising in the latter series of works.
\end{abstract}

\begin{keyword}
automorphic form, subconvexity problem for $\GL_{3}(\R)$, key identity, special Whittaker function, geometric approximate functional equation for $\GL_{3}(\R)$, automorphic representation theory, integral representation.

\MSC[2010]Primary 11F70, Secondary 32N10.
\end{keyword}

\end{frontmatter}

\section{Introduction}
\label{Introduction}

The subconvexity problem for standard automorphic $L$-functions on $\GL_{2}$ 
over arbitrary number fields was completely solved in the paper of Philippe Michel and Akshay Venkatesh \cite{14}. They used suitably truncated integral representations of the corresponding $L$-functions \cite[\text{Lemma 5.1.4}]{14}, \cite[\text{Lemma 11.9}]{20} and dynamical arguments 
specific to $\GL_{2}$ to obtain their main theorem. The starting point of their argument, specialized to the case of automorphic forms $\varphi$ on $\GL_{2}(\Z)\backslash\GL_{2}(\R)$, is the global zeta integral $Z(\varphi,\tfrac{1}{2}+iT)$ given by
\begin{displaymath}
\begin{split}
Z(\varphi,\tfrac{1}{2}+iT)&=\int_{y\in\mathbb{R}^{\times}/\mathbb{Z}^{\times}
}\varphi(a(y))y^{iT}d^{\times}y\;\;\;\;\text{with $a(y):=\left(\begin{matrix}y&0\\0&1\end{matrix}\right)$}
\end{split}
\end{displaymath}
and a basic unfolding principle relating global and local zeta functionals with their associated $L$-functions. The main step in their proof is the estimation of the global functional from above for a suitable choice of vector $\varphi$ for which the local functional has a good lower bound. Their method for the study of the subconvexity problem on $\GL_{2}(\R)$ is therefore based on integral representations of $L$-functions and -- to our knowledge -- has never been used for $\GL_{3}(\R)$. Our Theorem \ref{Bounds for Z(varphi,1/2+iT) and Z(W_varphi,1/2+iT) on GL_3(R)} below and its proof, given later in this paper, is an adaptation of parts of their ideas to $\GL_{3}(\R)$, since it contains an analogous construction of $\varphi$ and similar upper and lower bounds for the corresponding global and local zeta integrals appearing in the $\GL_{3}(\R)$-problem.\\

The first subconvex bounds for $\GL(1)$ twists of a fixed Hecke--Maass cusp form on $\GL(3)$, not necessarily self-dual, were obtained by Munshi in \cite{16,17}, who used a completely different strategy. His technique was subsequently simplified by Holowinsky--Nelson \cite{5} for the $q$-aspect and by Lin \cite{13} for the $t$-aspect. The authors of these papers discovered, through a careful study of Munshi's work (see \cite[App.\;B]{5}), a ``key identity" implicit in his papers underlying the success of his method. By extracting this key identity, which amounts to the Poisson summation formula applied to an incomplete Gauss sum, they were able to streamline the method and improve the exponent. For the $t$-aspect as addressed by Lin \cite{13}, the relevant key identity is
\begin{equation}\label{key identity 1}
\begin{split}
n^{-iT}V_{A}\left(\frac{2\pi n}{N}\right)
&=\left(\frac{2\pi}{T}\right)^{iT}\left(\frac{\ell}{p}\right)^{1-iT}e\left(\frac{T}{2\pi}\right)\frac{T^{3/2}}{N}\sum_{r=1}^{\infty}r^{-iT}e\left(-\frac{np}{\ell r}\right)V\left(\frac{r}{Np/\ell T}\right)+O\left(T^{1/2-A}\right)
\\
&\quad-\left(\frac{2\pi}{N}\right)^{iT}e\left(\frac{T}{2\pi}\right)T^{1/2}\sum_{\substack{r\in\Z\\r\neq0}}\int_{\R_{+}^{\times}}x^{-iT}e\left(-\frac{nT}{Nx}\right)V(x)e\left(-\frac{rNp}{\ell T}x\right)dx,
\end{split}
\end{equation}
where $V$ and $V_{A}$ are fixed smooth cutoffs (see \cite{13} for details). As explained in \cite{5,13}, the key identity leads quickly to a subconvex bound after an amplification step and some fairly standard manipulations. The authors of \cite{5,13} could not explain the origins of their key identities and left open the question of whether there might be a natural way to discover their usefulness.\\

In this paper, we give a possible explanation of the origin of Lin's key identity \eqref{key identity 1} and its application to subconvexity from the perspective of integral representations of $L$-functions attached to automorphic representations $\pi$ on $\GL_{3}(\R)$. By combining the different approaches of Michel--Venkatesh and of Munshi--Holowinsky--Nelson--Lin, we obtain an integral-representation-theoretic derivation of the $t$-aspect key identity \eqref{key identity 1}. This allows us to re-prove Lin's subconvex bound by using his results \cite{13} and to put his strategy on an automorphic foundation. The deduction and explanation of Lin's key identity with representation-theoretic methods is the main result of this paper together with the following theorem:
\begin{theorem}\label{Bounds for Z(varphi,1/2+iT) and Z(W_varphi,1/2+iT) on GL_3(R)}(Bounds for the global and local zeta integrals on $\GL_{3}(\R)$)\\
Let $\varphi\in\pi$ be the automorphic function corresponding to the Whittaker function $W_{\varphi}\in\mathcal{W}(\pi)$ constructed in \S\ref{suitable Whittaker function} and let $\varphi^{1}$ be the first projection of $\varphi$ defined in \eqref{first projection formula}. Then the global zeta integral $Z(\varphi,\tfrac{1}{2}+iT)$ and the local zeta integral $Z(W_{\varphi},\tfrac{1}{2}+iT)$ for $\GL_{3}(\R)$ satisfy the following bounds:
\begin{displaymath}
\begin{split}
Z(\varphi,\tfrac{1}{2}+iT)=\int_{\mathbb{R}^{\times}/\mathbb{Z}^{\times}}\varphi^{1}\left[\left(\begin{matrix}y&0&0\\0&1&0\\0&0&1\end{matrix}\right)\right]y^{iT-1/2}d^{\times}y&\ll T^{1/4-1/36+\varepsilon}
\end{split}
\end{displaymath}
and
\begin{displaymath}
\begin{split}
Z(W_{\varphi},\tfrac{1}{2}+iT)=\int_{\mathbb{R}^{\times}}W_{\varphi}\left[\left(\begin{matrix}y&0&0\\0&1&0\\0&0&1\end{matrix}\right)\right]|y|^{iT-1/2}d^{\times}y\asymp T^{-1/2}\;\;\;\;\text{as $T\rightarrow\infty$}.\end{split}
\end{displaymath}
\end{theorem}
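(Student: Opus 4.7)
The two bounds are of different characters: the local asymptotic $Z(W_{\varphi},1/2+iT)\asymp T^{-1/2}$ is an explicit stationary-phase evaluation in the Whittaker model, while the upper bound for the global zeta integral is where the genuine subconvex cancellation enters. I would prove them separately; combined with the standard unfolding identity $L(\pi,1/2+iT)=Z(\varphi,1/2+iT)/Z(W_{\varphi},1/2+iT)$, they immediately yield Lin's $\GL_{3}(\R)$ subconvex bound in the $t$-aspect with exponent $3/4-1/36+\varepsilon$.

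For the local asymptotic, I would start from the explicit formula for $W_{\varphi}$ constructed in Section~\ref{suitable Whittaker function}. By that construction, the restriction $y\mapsto W_{\varphi}(\mathrm{diag}(y,1,1))$ is a smooth bump localized at a $T$-adapted scale and carrying a quadratic phase whose derivative matches the logarithmic derivative of $|y|^{iT}$ at exactly one point $y_{0}$. The Mellin-type integral against $|y|^{iT-1/2}\,d^\times y$ then reduces, after a change of variable, to a one-dimensional oscillatory integral with a single non-degenerate critical point, and the standard stationary-phase expansion gives both the matching upper and lower bounds of order $T^{-1/2}$. The main bookkeeping is simply to check that the Whittaker function of Section~\ref{suitable Whittaker function} really has the claimed stationary-phase profile, including the correct amplitude constants so that the two-sided $\asymp$ is not lost.

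For the global upper bound, the plan is: (i) unfold $Z(\varphi,1/2+iT)$ using the first projection formula so that it becomes a $\GL_{2}$-type linear functional of $\varphi^{1}$ against the character $y\mapsto y^{iT-1/2}$; (ii) apply Cauchy--Schwarz together with a Plancherel-style spectral bound for $\varphi^{1}$, which already delivers the convexity exponent $T^{1/4+\varepsilon}$ and matches the expected first factor in the final bound; (iii) extract the extra saving $T^{-1/36}$ by inserting an amplifier and performing a Poisson/Voronoi-type dual summation on the bilinear form that emerges. From the representation-theoretic perspective of this paper, that dual summation is precisely the content of the key identity~\eqref{key identity 1}, so once the integral-representation derivation of \eqref{key identity 1} is in place, one can close out the numerical estimate by citing the arithmetic analysis of Lin~\cite{13}.

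The main obstacle is step (iii) of the global bound: balancing the amplifier length against the length of the dual sum so as to preserve the full saving of $1/36$, without the off-diagonal contribution after dual summation overwhelming the diagonal. A closely related and more conceptual obstacle is the compatibility check that the Whittaker function chosen in Section~\ref{suitable Whittaker function} to make $Z(W_{\varphi},1/2+iT)$ behave as $\asymp T^{-1/2}$ is simultaneously the right vector for the amplification-plus-Voronoi machinery to expose exactly the two sides of \eqref{key identity 1}. Making this compatibility explicit from the integral-representation side is in effect the statement that the amplified global zeta integral equals --- after expansion --- the two sides of \eqref{key identity 1}, and this is where the bulk of the genuine work in the proof lies.
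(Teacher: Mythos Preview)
Your treatment of the local asymptotic $Z(W_\varphi,\tfrac12+iT)\asymp T^{-1/2}$ is correct and matches the paper: the construction in \S\ref{suitable Whittaker function} gives $W_\varphi(a(y))=T^{3/4}e(-y/\sqrt T)\,V_0(y/T^{3/2})$, and after the change of variable $y=T^{3/2}z$ the Mellin integral becomes a standard one-dimensional stationary-phase integral with a single non-degenerate critical point, yielding Lemma~\ref{the shape of the local zeta integral and its truncation} and in particular $Z(W_\varphi,\tfrac12+iT)=C_T\cdot T^{-1/2}+O(T^{-3/2})$.

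For the global bound, however, your step~(ii) --- Cauchy--Schwarz together with a Plancherel-style spectral bound on $\varphi^{1}$ to reach convexity $T^{1/4+\varepsilon}$ --- is not what the paper does, and it is not clear how it would even be executed here. The period $\int_{\R_+^\times}\varphi^{1}(a(y))\,y^{iT-1/2}\,d^\times y$ runs over an infinite range, so a direct $L^2$ estimate produces nothing without a preliminary truncation; and there is no ready spectral decomposition of $\varphi^{1}$ on the mirabolic that hands you $T^{1/4}$. The paper's substitute for your step~(ii) is the \emph{geometric approximate functional equation} (Theorem~\ref{The geometric approximate functional equation for GL_3(R)}): one writes the smooth cutoff $h$ via Mellin inversion, shifts the contour to $\re(s)=\pm\tfrac12$, and invokes the convexity bound for $L(\pi,\tfrac12+s+iT)$ together with the stationary-phase estimate \eqref{local zeta integral bound} to restrict the $y$-integration to $[T^{-\kappa},T^\kappa]$ at cost $O(T^{1/4-\kappa/2+\varepsilon})$. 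A second reduction via the local functional equation (Theorem~\ref{approximate equation}) then shrinks the effective range to $Y\in[T^{-\varepsilon},T^\kappa]$. No Cauchy--Schwarz or spectral input on the period itself is used; convexity enters only to control the truncation error.

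Your step~(iii) is closer in spirit, but the order and mechanism differ from the paper. After the truncation just described, one unfolds $S_{f_0}(Y)$ via the Fourier--Whittaker expansion of $\varphi^{1}$ (\S\ref{integral analysis}) and observes that the inner integral $\int W_\varphi(a(ny))\,y^{iT-1/2}\,d^\times y$ becomes, after the substitution $y=1/x$, exactly Lin's main integral $M=\int x^{-iT}e(-nT/(Nx))V(x)\,dx$. The key identity is then simply Poisson summation applied to discretize $M$ as a Riemann sum with step $h=\ell T/(Np)$ (Lemma~\ref{keyidentity}); the amplification over primes $p\sim P$, $\ell\sim L$ is layered on afterwards (Lemma~\ref{amplifiedkeyidentity}), not before. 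The Voronoi step and the remaining arithmetic estimates are not redone in the paper but are imported wholesale from Lin~\cite{13}, with the final choice $\kappa=1/18$, $P=T^{5/18}$, $L=T^{1/9}$. In short, the structural ingredient missing from your outline is the truncation via the geometric approximate functional equation, and the role you assign to Cauchy--Schwarz is played instead by that truncation followed by an explicit unfolding into Lin's setup.
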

\noindent We will use Lin's results and his key identity \eqref{key identity 1} to prove the first estimate of Theorem \ref{Bounds for Z(varphi,1/2+iT) and Z(W_varphi,1/2+iT) on GL_3(R)}; the second estimate combined with the first directly implies Lin's bound \cite{13}
\begin{displaymath}
\begin{split}
L(\pi,\tfrac{1}{2}+iT)&=\frac{Z(\varphi,\tfrac{1}{2}+iT)}{Z(W_{\varphi},\tfrac{1}{2}+iT)}\ll T^{3/4-1/36+\varepsilon}\;\;\;\;\text{as $T\rightarrow\infty$}.\end{split}
\end{displaymath}
The proof of this bound, given later in this paper, shows where Lin's key identity comes from. Our explanation of its origin is new and is not recognizable in Lin's approach.\\

It remains to outline how we explain the origin of the key identity: The starting point of our explication is the Whittaker function $W_{\varphi}$ defined in a simple way in terms of an additive character and a compactly supported bump function such that $W_{\varphi}$ satisfies the key property
\begin{displaymath}
\begin{split}
W_{\varphi}\left[\left(\begin{matrix}y&0&0\\0&1&0\\0&0&1\end{matrix}\right)\right]&=T^{3/4}e\left(-\frac{y}{\sqrt{T}}\right)V_{0}\left(\frac{y}{T^{3/2}}\right).
\end{split}
\end{displaymath}
The motivation for this choice is a stationary phase analysis showing -- by the interaction between the additive and the multiplicative character -- that $Z(W_{\varphi},\frac{1}{2}+iT)\asymp T^{-1/2}$. Our choice of $W_{\varphi}$ is exactly analogous to the $\GL_{2}(\R)$-case in \cite[\text{Lemma 5.1.4}]{14} and might thus be regarded as natural. After the construction of the vector $\varphi$ coming from the Whittaker function $W_{\varphi}$, by the use of a Fourier-Whittaker expansion, we truncate the global zeta integral 
$Z(\varphi,\frac{1}{2}+iT)$ on $\GL_{3}(\R)$ to a suitable bounded interval $I\subset\R_{+}^{\times}$. Next, we unfold the truncated global zeta integral to obtain a bound roughly of the form
\begin{displaymath}
\begin{split}
Z(\varphi,\tfrac{1}{2}+iT)&\lessapprox\sum_{n=1}^{\infty}\frac{a(1,n)}{n}w_{0}\left(\frac{n}{T^{3/2}}\right)\int_{y\in\R_{+}^{\times}}W_{\varphi}\left[\left(\begin{matrix}ny&0&0\\0&1&0\\0&0&1\end{matrix}\right)\right]y^{iT-1/2}d^{\times}y,
\end{split}
\end{displaymath}
where the $a(1,n)$'s are the Fourier-Whittaker coefficients of $\varphi$ and $w_{0}\in C_{c}^{\infty}([1,2])$.\\
After the change of variables $y:=\frac{1}{x}$ in the above integral and by the use of the above key property satisfied by our Whittaker function $W_{\varphi}$, we obtain
\begin{displaymath}
\begin{split}
\int_{\R_{+}^{\times}}W_{\varphi}\left[\left(\begin{matrix}\frac{n}{x}&0&0\\0&1&0\\0&0&1\end{matrix}\right)\right]x^{1/2-iT}d^{\times}x&=T^{3/4}\int_{\R_{+}^{\times}}e\left(-\frac{n}{\sqrt{T}x}\right)V_{0}\left(\frac{n}{T^{3/2}x}\right)x^{1/2-iT}d^{\times}x.
\end{split}
\end{displaymath}
The second integral in the above equation can now be recognized as the main integral
\begin{displaymath}
\begin{split}
M=\int_{\R_{+}^{\times}}x^{-iT}e\left(-\frac{nT}{Nx}\right)V(x)dx
\end{split}
\end{displaymath}
in the proof of Lin's key identity \eqref{key identity 1}, if we set $n\asymp N:=T^{3/2}$ and $V(x):=\frac{1}{\sqrt{x}}V_{0}\left(\frac{n}{T^{3/2}x}\right)$. Exactly this observation justifies the above change of variables $y:=\frac{1}{x}$ and allows us further computations.\\
At this point, we replace the integral $M$ by the corresponding Riemann sum
\begin{displaymath}
\begin{split}
R&=h\sum_{r=1}^{\infty}(hr)^{-iT}e\left(-\frac{nT}{Nhr}\right)V(hr),\;\;\text{where $h>0$}
\end{split}
\end{displaymath}
plus its oscillations (error terms) around $M$'s exact value
\begin{displaymath}
\begin{split}
O=-\sum_{\substack{r\in\Z\\r\neq0}}\int_{\R_{+}^{\times}}x^{-iT}e\left(-\frac{nT}{Nx}\right)V(x)e\left(-\frac{rx}{h}\right)dx.
\end{split}
\end{displaymath}
These error terms are obtained with the Poisson summation formula applied to the Riemann sum $R$ as in \cite{13}. Due to the above discretization of the integral $M$, we have with the choice of $h=\frac{\ell T}{Np}$ the following new manifestation of the key identity
\begin{equation}\label{key identity 2}
\begin{split}
\int_{\R_{+}^{\times}}x^{-iT}e\left(-\frac{nT}{Nx}\right)V(x)dx&=\left(\frac{\ell T}{Np}\right)^{1-iT}\sum_{r=1}^{\infty}r^{-iT}e\left(-\frac{np}{\ell r}\right)V\left(\frac{r}{Np/\ell T}\right)\\
&\quad-\sum_{\substack{r\in\Z\\r\neq0}}\int_{\R_{+}^{\times}}x^{-iT}e\left(-\frac{nT}{Nx}\right)V(x)e\left(-\frac{rNp}{\ell T}x\right)dx,
\end{split}
\end{equation}
which is equivalent to Lin's \eqref{key identity 1}, if we apply a stationary phase analysis to the integral $M$ in the formula \eqref{key identity 2}. This quantization shows, that it is not just the phase $n^{-iT}$ for which the key identity is a substitute as in \cite{13}, but that it is in fact the whole integral $M$ which the key identity replaces. For this reason, our Whittaker function $W_{\varphi}$ relying on basic automorphic principles is able to determine the key identity relevant for the $t$-aspect subconvexity problem on $\GL_{3}$. This is the main observation and led us to the above explanation of the origin of the key identity in a natural and simple way.\\

We hope that the structural perspective suggested here may be useful in identifying analogous phenomena for $\GL_{n}(\mathbb{R})$, in uncovering a larger connection between subconvexity and integral representations of $L$-functions, and perhaps in a better understanding of the so called $\delta$-methods \cite{1,13,16,17} of which the key identity may be understood as a special case. Our hope is also that the insights gained in this paper will be useful for extending the method of Munshi and its simplifications by Holowinsky--Nelson and Lin to more general pairs $(\GL(n),\GL(m))$ and to get a deeper understanding of the subconvexity problem in general. We expect that our method will also work for the $q$-aspect, where it should reproduce the key identity of \cite{5}, and that the method will apply in the adelic setting as well, leading to a bound, which is uniform in all aspects of the twisting character $\chi$. This is work in progress.\\

This paper is organized as follows: Section \ref{Definitions and Basic Facts} introduces important definitions and basic facts, which will be needed through the paper. In section \ref{The Geometric Approximate Functional Equation for GL_{3}(R)} we present and prove the new truncated integral representation for the global zeta integral for $\GL_{3}(\R)$, which relies on a stationary phase computation for the special choice of the Whittaker function $W_{\varphi}$ presented in the subsections \ref{suitable Whittaker function} and \ref{stationary phase}. This new identity reduces the subconvexity problem for $\GL_{3}$ to an estimation of a period integral of the vector $\varphi$ and is the heart of section \ref{The Geometric Approximate Functional Equation for GL_{3}(R)}. In section \ref{Subconvexity for GL_{3}(R) L-Functions via Integral Representations} we explain the possible origin of our key identity \eqref{key identity 2} and finally, we use Lin's results \cite{13} to conclude.

\section{Definitions and Basic Facts}
\label{Definitions and Basic Facts}

As usual we denote $\exp(2\pi ix)$ by $e(x)$. We will use the variable $\varepsilon>0$ to denote an arbitrarily small positive constant, which may change from line to line. The notation $A\ll B$ will mean that $|A|\leq C|B|$ for some constant $C$. The notation $A\asymp B$ will mean that $B/T^{\varepsilon}\ll A\ll BT^{\varepsilon}$. We will also use the space $\R_{+}^{\times}:=(0,\infty)\cong\R^{\times}/\Z^{\times}$ with the corresponding measure $d^{\times}y:=\frac{dy}{|y|}$.\\
Let $\pi$ be an automorphic cuspidal representation of $\GL_{3}(\R)$ and let $\mathcal{W}(\pi)$ and $\mathcal{K}(\pi)$ be its unique Whittaker and Kirillov models with respect to the additive character
\begin{displaymath}
\begin{split}
\psi:\left(\begin{matrix}1&x&z\\0&1&y\\0&0&1\end{matrix}\right)\mapsto e(x+y).
\end{split}
\end{displaymath}
Let $\varphi\in\pi\subset L^{2}\left(\GL_{3}(\Z)\backslash\GL_{3}(\R)\right)$ be a fixed Hecke-Maass cusp form on $\GL_{3}(\mathbb{Z})$.\\
We define the matrix element $a(y)$ by
\begin{displaymath}
\begin{split}
a(y):&=\left(\begin{matrix}y&0&0\\0&1&0\\0&0&1\end{matrix}\right)\in\GL_{3}(\R).
\end{split}
\end{displaymath}
The following definitions and theorems are sometimes modified versions of the definitions and theorems given in the corresponding references. The little modifications are necessary to make the following computations in this article as natural as possible.

\begin{definition}(The Whittaker function for $\GL_{3}(\R)$)\;\cite[\text{pp.\;180--181}]{7}, \cite[\text{pp.\;235--236}]{8}\\
Let $\varphi\in\pi$. We define the Whittaker function $W_{\varphi}\in\mathcal{W}(\pi)$ corresponding to $\varphi\in\pi$ by
\begin{displaymath}
\begin{split}
W_{\varphi}(g):&=\int_{0}^{1}\int_{0}^{1}\int_{0}^{1}\varphi\left[\left(\begin{matrix}1&x&z\\0&1&y\\0&0&1\end{matrix}\right)g\right]e(-x-y)dxdydz.
\end{split}
\end{displaymath}
\end{definition}

\begin{definition}\label{first projection}(The first projection $\varphi^{1}$ of $\varphi$)\;\cite[\text{(8.8)}]{2}, \cite{3}, \cite{9}, \cite[\text{(7.2)\;and\;(7.7)}]{15}\\
We define the first projection $\varphi^{1}$ of the automorphic form $\varphi$ by
\begin{equation}\label{first projection formula}
\begin{split}
\varphi^{1}(g):&=\int_{u\in\R/\Z}\int_{v\in\R/\Z}\varphi\left[\left(\begin{matrix}1&0&u\\0&1&v\\0&0&1\end{matrix}\right)g\right]e(-v)dudv.
\end{split}
\end{equation}
We have the Fourier-Whittaker expansion \cite[\text{(7.7)}]{15}
\begin{displaymath}
\begin{split}
\varphi^{1}(g)&=\sum_{\substack{n=-\infty\\n\neq0}}^{\infty}\frac{a(1,|n|)}{|n|}W_{\varphi}\left[\left(\begin{matrix}n&0&0\\0&1&0\\0&0&1\end{matrix}\right)g\right],
\end{split}
\end{displaymath}
with Fourier-Whittaker coefficients $a(1,|n|)\in\C$.\\
In particular, by setting $g:=a(y)$, we obtain the expansion \cite[\text{(8.8)}]{2}
\begin{equation}\label{Fourier-Whittaker expansion}
\begin{split}
\varphi^{1}(a(y))&=\sum_{\substack{n=-\infty\\n\neq0}}^{\infty}\frac{a(1,|n|)}{|n|}W_{\varphi}(a(ny)).
\end{split}
\end{equation}
\end{definition}

\begin{definition}(The dual automorphic form $\widetilde{\varphi}$)\;\cite[\text{pp.\;180--181}]{7}, \cite[\text{pp.\;235--236}]{8}\\
We define the automorphic form $\widetilde{\varphi}$, which is dual to the automorphic form $\varphi$ by the expression
\begin{displaymath}
\begin{split}
\widetilde{\varphi}(g):&=\varphi\left({}^{t}g^{-1}\right)=\varphi\left(w\cdot{}^{t}g^{-1}\right),
\end{split}
\end{displaymath}
where
\begin{displaymath}
\begin{split}
w:&=\left(\begin{matrix}0&0&1\\0&-1&0\\1&0&0\end{matrix}\right)\in\SL_{3}(\R)\subset\GL_{3}(\R).
\end{split}
\end{displaymath}
\end{definition}

\begin{definition}\label{dual Whittaker function}(The dual Whittaker function $\widetilde{W}_{\varphi}$ belonging to $\widetilde{\varphi}$)\;\cite[\text{p.\;181}]{7}, \cite[\text{pp.\;235--236}]{8}\\
We define the dual Whittaker function $\widetilde{W}_{\varphi}$ corresponding to the automorphic form $\widetilde{\varphi}$ by
\begin{displaymath}
\begin{split}
\widetilde{W}_{\varphi}(g):&=W_{\varphi}(w\cdot{}^{t}g^{-1})=W_{\widetilde{\varphi}}(g),
\end{split}
\end{displaymath}
where the matrix element $w$ is as above.\\
This means that we have the Fourier-Whittaker expansion \cite[\text{(13.1.6)}]{8}
\begin{equation}\label{Dual form Whittaker expansion}
\begin{split}
\widetilde{\varphi}^{1}(g)&=\sum_{\substack{n=-\infty\\n\neq0}}^{\infty}\frac{a(|n|,1)}{|n|}\widetilde{W_{\varphi}}\left[\left(\begin{matrix}n&0&0\\0&1&0\\0&0&1\end{matrix}\right)g\right].
\end{split}
\end{equation}
\end{definition}

\begin{theorem}\label{projection identity}(The $\GL_{3}(\R)$ projection identity)\;\cite[\text{(13.3.1)}]{8}\\
Let $\varphi\in\pi$ be an automorphic form on $\GL_{3}(\R)$. We have
\begin{displaymath}
\begin{split}
\varphi^{1}(g)&=\int_{x\in\R}\widetilde{\varphi}^{1}\left[\left(\begin{matrix}1&0&0\\x&1&0\\0&0&1\end{matrix}\right)\cdot w'\cdot{}^{t}g^{-1}\right]dx,
\end{split}
\end{displaymath}
where $w'$ is given by
\begin{displaymath}
\begin{split}
w':&=\left(\begin{matrix}-1&0&0\\0&0&-1\\0&1&0\end{matrix}\right)\in\GL_{3}(\R).
\end{split}
\end{displaymath}
\end{theorem}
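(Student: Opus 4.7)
The plan is to verify the identity by expanding both sides into Fourier-Whittaker series and matching them term-by-term via a $\GL_{3}(\R)$ local Whittaker transform. By right $K_{3}$-equivariance and the Iwasawa decomposition I would reduce to the case where $g$ lies in the maximal torus, say $g=\mathrm{diag}(y_{1}y_{2},y_{1},1)$, for which the transpose-inverse $w'\cdot{}^{t}g^{-1}$ has a transparent diagonal shape. Writing $m(x):=\left(\begin{matrix}1&0&0\\ x&1&0\\ 0&0&1\end{matrix}\right)$, inserting \eqref{Fourier-Whittaker expansion} on the left and \eqref{Dual form Whittaker expansion} on the right yields
$$\varphi^{1}(g)=\sum_{n\neq 0}\frac{a(1,|n|)}{|n|}W_{\varphi}[a(n)g],\quad \int_{\R}\widetilde{\varphi}^{1}[m(x)w'\cdot{}^{t}g^{-1}]\,dx=\sum_{n\neq 0}\frac{a(|n|,1)}{|n|}\int_{\R}\widetilde{W}_{\varphi}[a(n)m(x)w'\cdot{}^{t}g^{-1}]\,dx.$$
The interchange of sum and integral on the right is justified by the rapid decay of $\widetilde{W}_{\varphi}$ towards the cusp (e.g.\;Stade's bounds), so the identity reduces to a term-by-term equality between the $n$th summands.

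I would then rewrite each right-hand summand using $\widetilde{W}_{\varphi}(h)=W_{\varphi}(w\cdot{}^{t}h^{-1})$. An explicit matrix computation of $w\cdot{}^{t}(a(n)m(x)w'\cdot{}^{t}g^{-1})^{-1}$, followed by a rescaling $x\mapsto x/(ny_{1})$, transforms the $x$-integral into a canonical form $\int_{\R}W_{\varphi}[u(x)\cdot a(n)g]\,dx$, where $u(x)$ is a one-parameter embedding of $\R$ into an upper-unipotent subgroup complementary to the one stabilizing $\psi$. The required local Whittaker identity then asserts
$$|n|\cdot a(|n|,1)\int_{\R}W_{\varphi}[u(x)\cdot a(n)g]\,dx=a(1,|n|)\cdot W_{\varphi}[a(n)g],$$
reflecting the fact that the composition of the outer involution $g\mapsto{}^{t}g^{-1}$ with the long Weyl element $w$ swaps the two generic Fourier coefficients $a(1,n)$ and $a(n,1)$ of $\pi$ and $\widetilde{\pi}$.

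The main obstacle is establishing this local Whittaker identity on $\GL_{3}(\R)$. The cleanest route is via Jacquet's integral formula, writing $W_{\varphi}$ as an iterated integral of $\varphi$ over the upper-unipotent radical against $\psi$ and then performing a long-Weyl-element swap that converts the integration variable responsible for the ``missing'' character into an integration over the opposite unipotent direction; after contracting that direction one recovers precisely the integral transform of $\widetilde{W}_{\varphi}$ on the right-hand side. Uniqueness of the Whittaker functional on an irreducible generic representation pins down the scalar. Alternatively, this identity is exactly Theorem 13.3.1 of Goldfeld's book (reference [8] in the paper) and may simply be cited. With the local identity in hand, summing over $n$ and reversing the interchange of sum and integral yields the claimed projection identity.
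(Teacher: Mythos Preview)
The paper does not supply a proof of this theorem; it is simply quoted from the literature with the citation \cite[(13.3.1)]{8}. Note also that reference~[8] is Jacquet--Piatetski-Shapiro--Shalika, \emph{Automorphic Forms on $\GL(3)$ II}, not Goldfeld's book (which is reference~[4]); you conflate the two at the end of your sketch.

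More substantively, your proposed argument has a genuine gap. You reduce the identity to a ``local Whittaker identity''
\[
|n|\cdot a(|n|,1)\int_{\R}W_{\varphi}\bigl[u(x)\,a(n)g\bigr]\,dx \;=\; a(1,|n|)\cdot W_{\varphi}\bigl[a(n)g\bigr],
\]
but this is not a local statement at all: it contains the \emph{global} Fourier coefficients $a(1,|n|)$ and $a(|n|,1)$, which for a non-self-dual $\pi$ are genuinely different numbers. No computation with a single Whittaker function $W_{\varphi}$, and no appeal to uniqueness of Whittaker functionals, can produce a formula relating $a(1,|n|)$ to $a(|n|,1)$; that relation (when it exists, e.g.\ $a(n,1)=\overline{a(1,n)}$ for a Hecke eigenform) comes from Hecke theory, not from local representation theory. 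In other words, the projection identity does \emph{not} match the two Fourier--Whittaker expansions term-by-term in $n$, and attempting to do so assumes exactly the global input you are trying to establish.

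The standard proof in \cite{8} (see also Bump \cite[Chapter~V]{2}) avoids this entirely. One works with the full Piatetski-Shapiro--Shalika Fourier expansion of $\varphi$, summed over $U_{2}(\Z)\backslash\GL_{2}(\Z)$, applies the outer involution $g\mapsto w\cdot{}^{t}g^{-1}$ to pass to $\widetilde{\varphi}$, and then performs a direct matrix computation showing that the unipotent integrations defining $\varphi^{1}$ and the $x$-integral of $\widetilde{\varphi}^{1}$ coincide. The argument lives at the level of the automorphic form itself, not at the level of individual Whittaker terms.
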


\begin{definition}\label{The global Zeta integrals}(The global zeta integrals for $\GL_{3}(\R)$)\;\cite[\text{(0.1.4)}]{7}, \cite[\text{p.\;240}]{8}, \cite{15}\\
We define the two global zeta integrals $Z(\varphi,s)$ and $\widetilde{Z}(\widetilde{\varphi},s)$ for $s\in\mathbb{C}$ by
\begin{displaymath}
\begin{split}
Z(\varphi,s):&=\int_{\mathbb{R}^{\times}/\mathbb{Z}^{\times}}\varphi^{1}\left[\left(\begin{matrix}y&0&0\\0&1&0\\0&0&1\end{matrix}\right)\right]y^{s-1}d^{\times}y,\\
\widetilde{Z}(\widetilde{\varphi},s):&=\int_{\mathbb{R}^{\times}/\mathbb{Z}^{\times}}\int_{x\in\R}\widetilde{\varphi}^{1}\left[\left(\begin{matrix}y&0&0\\x&1&0\\0&0&1\end{matrix}\right)\cdot w'\right]y^{s-1}dxd^{\times}y.
\end{split}
\end{displaymath}
\end{definition}

\begin{definition}\label{The local Zeta integrals}(The local zeta integrals for $\GL_{3}(\R)$)\;\cite[\text{p.\;137}]{2}, \cite[\text{Theorem\;(11.2)}]{8}, \cite{15}\\
The two local zeta integrals $Z(W_{\varphi},s)$ and $\widetilde{Z}(\widetilde{W}_{\varphi},s)$ for $\GL_{3}(\R)$ are given by
\begin{displaymath}
\begin{split}
Z(W_{\varphi},s):&=\int_{\mathbb{R}^{\times}}W_{\varphi}\left[\left(\begin{matrix}y&0&0\\0&1&0\\0&0&1\end{matrix}\right)\right]|y|^{s-1}d^{\times}y,\\
\widetilde{Z}(\widetilde{W}_{\varphi},s):&=\int_{\mathbb{R}^{\times}}\int_{x\in\R}\widetilde{W}_{\varphi}\left[\left(\begin{matrix}y&0&0\\x&1&0\\0&0&1\end{matrix}\right)\cdot w'\right]|y|^{s-1}dxd^{\times}y.
\end{split}
\end{displaymath}
\end{definition}

\begin{definition}(The $L$-function for a representation $\pi$ on $\GL_{3}(\R)$)\;\cite[\text{Definition 6.5.2}]{4}, \cite{15}\\
We set
\begin{displaymath}
\begin{split}
L(\pi,s):&=\sum_{n=1}^{\infty}\frac{a(1,n)}{n^{s}}.
\end{split}
\end{displaymath}
\end{definition}

\begin{theorem}(The relation between $Z(\varphi,s)$ and $Z(W_{\varphi},s)$)\label{the relation}\;\cite[\text{(13.4.2)}]{8}, \cite[\text{(7.8)}]{15}\\
Let $W_{\varphi}$ be the Whittaker function corresponding to the automorphic function $\varphi\in\pi$.\\
We have the identity
\begin{displaymath}
\begin{split}
Z(\varphi,s)&=L(\pi,s)Z(W_{\varphi},s),
\end{split}
\end{displaymath}
which is fulfilled in the region of absolute convergence of $Z(W_{\varphi},s)$.
\end{theorem}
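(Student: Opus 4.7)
The plan is a direct unfolding argument based on the Fourier--Whittaker expansion \eqref{Fourier-Whittaker expansion}, which converts the global zeta integral into a product of a Dirichlet series and a local integral over $\R^{\times}$.

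First, I would reduce the integration domain: since $\Z^{\times}=\{\pm1\}$, a fundamental domain for $\R^{\times}/\Z^{\times}$ is $\R_{+}^{\times}=(0,\infty)$, so
\begin{displaymath}
Z(\varphi,s)=\int_{0}^{\infty}\varphi^{1}(a(y))\,y^{s-1}d^{\times}y.
\end{displaymath}
Working inside the region of absolute convergence of $Z(W_{\varphi},s)$ (exactly the hypothesis of the theorem), I substitute $\varphi^{1}(a(y))=\sum_{n\neq0}\tfrac{a(1,|n|)}{|n|}W_{\varphi}(a(ny))$ from \eqref{Fourier-Whittaker expansion} and interchange summation and integration to get
\begin{displaymath}
Z(\varphi,s)=\sum_{n\neq0}\frac{a(1,|n|)}{|n|}\int_{0}^{\infty}W_{\varphi}(a(ny))\,y^{s-1}d^{\times}y.
\end{displaymath}

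Next, for each nonzero $n\in\Z$ I perform the substitution $u=ny$ in the inner integral, using that $d^{\times}y=dy/|y|$ is the multiplicative Haar measure and hence scale invariant. For $n>0$ the domain $(0,\infty)$ is preserved and $y^{s-1}=(u/n)^{s-1}=n^{1-s}u^{s-1}$, while for $n<0$ the domain maps to $(-\infty,0)$ and $y^{s-1}=|n|^{1-s}|u|^{s-1}$. In both cases the inner integral equals $|n|^{1-s}\int_{I_{n}}W_{\varphi}(a(u))\,|u|^{s-1}d^{\times}u$, with $I_{n}=(0,\infty)$ or $(-\infty,0)$ according to the sign of $n$. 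Pairing the $n>0$ and $n<0$ contributions for each positive index then fuses the two half-line integrals into the full integral over $\R^{\times}$, namely $Z(W_{\varphi},s)$, while the arithmetic factors collapse as $\tfrac{a(1,n)}{|n|}\cdot|n|^{1-s}=\tfrac{a(1,n)}{n^{s}}$.

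Combining these observations yields
\begin{displaymath}
Z(\varphi,s)=\left(\sum_{n=1}^{\infty}\frac{a(1,n)}{n^{s}}\right)\int_{\R^{\times}}W_{\varphi}(a(u))|u|^{s-1}d^{\times}u=L(\pi,s)\,Z(W_{\varphi},s),
\end{displaymath}
as claimed. The only real technicality is the justification of the sum--integral interchange, which is precisely the reason the identity is asserted only in the region of absolute convergence of $Z(W_{\varphi},s)$; there, standard Hecke-eigenvalue bounds on the $a(1,n)$ together with rapid decay of $W_{\varphi}(a(y))$ as $|y|\to\infty$ and controlled behaviour as $|y|\to0$ (via the Kirillov model $\mathcal{K}(\pi)$) make the double series absolutely convergent and legitimise every manipulation above.
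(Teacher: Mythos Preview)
Your proposal is correct and follows essentially the same unfolding argument as the paper: substitute the Fourier--Whittaker expansion \eqref{Fourier-Whittaker expansion}, combine the $\pm n$ terms over $\R_{+}^{\times}$ into a single sum over $n\geq 1$ with an integral over $\R^{\times}$, and scale $y\mapsto y/n$ to factor out $L(\pi,s)$. The only cosmetic difference is the order of operations---the paper first merges $\pm n$ into $\int_{\R^{\times}}$ and then rescales, whereas you rescale each term and then pair---but the content is identical.
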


\begin{proof}
This is given in the adelic language in \cite{7,8}. We recall the basic unfolding calculation for completeness.\\
We calculate
\begin{displaymath}
\begin{split}
Z(\varphi,s)&=\int_{\mathbb{R}^{\times}/\mathbb{Z}^{\times}}\varphi^{1}\left[\left(\begin{matrix}y&0&0\\0&1&0\\0&0&1\end{matrix}\right)\right]y^{s-1}d^{\times}y=\int_{\mathbb{R}^{\times}/\mathbb{Z}^{\times}}\left(\sum_{\substack{n=-\infty\\n\neq0}}^{\infty}\frac{a(1,|n|)}{|n|}W_{\varphi}(a(ny))\right)y^{s-1}d^{\times}y\\
&=\sum_{\substack{n=-\infty\\n\neq0}}^{\infty}\frac{a(1,|n|)}{|n|}\int_{\mathbb{R}^{\times}/\mathbb{Z}^{\times}}W_{\varphi}(a(ny))y^{s-1}d^{\times}y=\sum_{n=1}^{\infty}\frac{a(1,n)}{n}\int_{\mathbb{R}^{\times}}W_{\varphi}(a(ny))|y|^{s-1}d^{\times}y\\
&=\sum_{n=1}^{\infty}\frac{a(1,n)}{n^{s}}\int_{\mathbb{R}^{\times}}W_{\varphi}(a(y))|y|^{s-1}d^{\times}y=L(\pi,s)Z(W_{\varphi},s).
\end{split}
\end{displaymath}
\end{proof}

\begin{theorem}\label{local functional equation}(The local functional equation for $\GL_{3}(\R)$)\;\cite[\text{(8.3)}]{2}, \cite[\text{Theorem\;(11.2)}]{8},\cite{16}\\
We have the local functional equation
\begin{displaymath}
\begin{split}
\widetilde{Z}(\widetilde{W}_{\varphi},1-s)&=\gamma(\pi,s)Z(W_{\varphi},s),
\end{split}
\end{displaymath}
where the gamma factor $\gamma(\pi,s)$ is defined by
\begin{displaymath}
\begin{split}
\gamma(\pi,s):&=\frac{L(\pi,s)}{L(\widetilde{\pi},1-s)}=\frac{\varepsilon_{\infty}(\pi,s)L_{\infty}(\widetilde{\pi},1-s)}{L_{\infty}(\pi,s)}=\pi^{3s-\frac{3}{2}}\frac{\Gamma\left(\frac{1-s+\alpha_{1}}{2}\right)\Gamma\left(\frac{1-s+\alpha_{2}}{2}\right)\Gamma\left(\frac{1-s+\alpha_{3}}{2}\right)}{\Gamma\left(\frac{s-\alpha_{1}}{2}\right)\Gamma\left(\frac{s-\alpha_{2}}{2}\right)\Gamma\left(\frac{s-\alpha_{3}}{2}\right)}.
\end{split}
\end{displaymath}
In the above equation, the three constants $\alpha_{1},\alpha_{2},\alpha_{3}\in\C$ are complex Langlands parameters, which satisfy $\left|\re(\alpha_{i})\right|<\frac{1}{2}$ for $i=1,2,3$ and depend on the representation $\pi$ of $\GL_{3}(\R)$.\\
Therefore, the gamma factor $\gamma(\pi,s)$ has no poles for $\re(s)\leq\frac{1}{2}$.
\end{theorem}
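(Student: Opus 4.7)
The statement is the archimedean local functional equation for $\GL_{3}(\R)$, and my plan is to follow the standard Jacquet--Piatetski-Shapiro--Shalika strategy specialized to $n = 3$ over $\R$, which is essentially the content of the cited references. First I would verify that both sides make sense: $Z(W_{\varphi},s)$ is absolutely convergent for $\re(s)$ sufficiently large (using classical rapid-decay bounds on Whittaker functions along the center of the mirabolic), while $\widetilde{Z}(\widetilde{W}_{\varphi},1-s)$ is absolutely convergent for $\re(s)$ sufficiently small. Both are known to admit meromorphic continuation to all of $\C$.

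The core step is the uniqueness-of-Whittaker-models argument. After the substitution $g \mapsto w \cdot {}^{t}g^{-1}$, the dual zeta integral $\widetilde{Z}(\widetilde{W}_{\varphi},1-s)$ can be re-expressed as a second meromorphic family of linear functionals on $\mathcal{W}(\pi)$, and a direct check shows that this family and $W_{\varphi} \mapsto Z(W_{\varphi},s)$ transform identically under right translation by the mirabolic subgroup $P_{3} \subset \GL_{3}(\R)$. By Shalika's uniqueness theorem for Whittaker models, any two such equivariant functionals on $\mathcal{W}(\pi)$ must be proportional, with a proportionality constant depending only on $\pi$ and $s$. This constant is by definition $\gamma(\pi,s)$, and establishes the functional equation abstractly.

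The explicit formula for $\gamma(\pi,s)$ is then obtained by specializing $W_{\varphi}$ to a distinguished Whittaker vector coming from the Langlands realization $\pi \hookrightarrow \mathrm{Ind}_{B(\R)}^{\GL_{3}(\R)}(|\cdot|^{\alpha_{1}} \otimes |\cdot|^{\alpha_{2}} \otimes |\cdot|^{\alpha_{3}})$, for which the local zeta integral factors across the three Langlands parameters and the archimedean functional equation reduces to three copies of Tate's $\GL_{1}(\R)$ local functional equation. Combining the Tate gamma factors $\Gamma_{\R}(s - \alpha_{i}) / \Gamma_{\R}(1 - s + \alpha_{i})$ for $i = 1, 2, 3$ with the convention $\Gamma_{\R}(s) := \pi^{-s/2}\Gamma(s/2)$ produces exactly the claimed quotient of Gamma functions together with the $\pi^{3s - 3/2}$ prefactor. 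The final assertion that $\gamma(\pi,s)$ has no poles for $\re(s) \leq \tfrac{1}{2}$ then drops out immediately from the Jacquet--Shalika bound $|\re(\alpha_{i})| < \tfrac{1}{2}$: the poles of the numerator factor $\Gamma((1 - s + \alpha_{i})/2)$ lie at $s = 1 + \alpha_{i} + 2k$ for $k \in \Z_{\geq 0}$, all of which have $\re(s) \geq 1 + \re(\alpha_{i}) > \tfrac{1}{2}$. I expect the main technical obstacle to be the verification, for the distinguished Whittaker vector, that neither local zeta integral is identically zero so that the proportionality constant $\gamma(\pi,s)$ can genuinely be read off; this is a Jacquet-integral / intertwining-operator computation that is standard in the archimedean local theory but that must be carried out carefully for the specific vector chosen.
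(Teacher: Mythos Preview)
The paper does not supply its own proof of this theorem: it is stated as a quoted result from the literature (Bump \cite{2}, Jacquet--Piatetski-Shapiro--Shalika \cite{8}, Munshi \cite{16}) and no argument is given in the body of the paper. Your proposal is a correct and standard outline of the JPSS proof that those references contain, so there is nothing to compare against and nothing to correct.
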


\begin{theorem}\label{Substructure of the Kirillov model of pi}(Substructure of the Kirillov model of $\pi$)\;\cite{10}, \cite[\text{Theorem 1}]{11}\\
Let $U_{2}(\R)$ be the subgroup of upper triangular unipotent matrices in $\GL_{2}(\R)$ with $1$'s on the diagonal and real entries above the diagonal and denote by $\theta:U_{2}(\R)\to\C$ the multiplicative character
\begin{displaymath}
\begin{split}
\theta:\left(\begin{matrix}1&x\\0&1\end{matrix}\right)&\mapsto e(x)\;\;\text{such that}\;\;\theta(u\cdot v)=\theta(u)\theta(v)\;\text{for all $u,v\in U_{2}(\R)$}.
\end{split}
\end{displaymath}
Let $\pi$ be a generic unitary irreducible representation of $\GL_{3}(\R)$ and denote by $C_{c}^{\infty}\left(\theta,\GL_{2}(\R)\right)$ the space of smooth and compactly supported modulo $U_{2}(\R)$ functions $f:\GL_{2}(\R)\to\C$ such that $f(ug)=\theta(u)f(g)$ for all $u\in U_{2}(\R)$, $g\in\GL_{2}(\R)$.\\
Given a function $\phi\in C_{c}^{\infty}\left(\theta,\GL_{2}(\R)\right)$ there is a unique Whittaker function $W\in\mathcal{K}(\pi)$ such that, for all $g\in\GL_{2}(\R)$,
\begin{displaymath}
\begin{split}
W\left[\left(\begin{matrix}g&0\\0&1\end{matrix}\right)\right]&=\phi(g).
\end{split}
\end{displaymath}
\end{theorem}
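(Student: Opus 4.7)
The statement is essentially the restriction-to-$\GL_{2}(\R)$ form of the archimedean Kirillov model theorem for $\GL_{3}(\R)$, and my plan is to combine a transparent transfer computation between the mirabolic $P_{3}$ and its subgroup $\GL_{2}\times\{1\}$ with the deep existence result in \cite[Thm.~1]{11} (following \cite{10}). Uniqueness follows from an explicit Whittaker-property computation: any element of the mirabolic $P_{3}=\{g\in\GL_{3}(\R):\text{last row}=(0,0,1)\}$ decomposes as
\[
\begin{pmatrix} g & v \\ 0 & 1 \end{pmatrix}
=\begin{pmatrix} I_{2} & v \\ 0 & 1 \end{pmatrix}\begin{pmatrix} g & 0 \\ 0 & 1 \end{pmatrix},
\]
with $g\in\GL_{2}(\R)$ and $v=(v_{1},v_{2})^{t}\in\R^{2}$. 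The left factor lies in the standard unipotent $N_{3}$ with $\psi$-value $e(v_{2})$, so the Whittaker transformation law forces
\[
W\!\left[\begin{pmatrix} g & v \\ 0 & 1 \end{pmatrix}\right]=e(v_{2})\,W\!\left[\begin{pmatrix} g & 0 \\ 0 & 1 \end{pmatrix}\right].
\]
Thus $W|_{P_{3}}$ is determined by $W|_{\GL_{2}\times\{1\}}$; since the Kirillov model is realized by restriction to $P_{3}$, the map $W\mapsto\phi:=W|_{\GL_{2}\times\{1\}}$ is injective, yielding uniqueness. The same identity also shows that $\phi(ug)=\theta(u)\phi(g)$ for $u\in U_{2}$ is a necessary consequence of the Whittaker property on $\GL_{3}(\R)$.

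For existence, I would invoke the archimedean Kirillov theorem of Kemarsky \cite[Thm.~1]{11}, which asserts that for a generic unitary irreducible representation $\pi$ of $\GL_{n}(\R)$ the Kirillov model contains $C_{c}^{\infty}(\theta,N_{n}\backslash P_{n})$ as a subspace. Specializing to $n=3$, I would use the semidirect-product decompositions $P_{3}=\GL_{2}\ltimes V$ and $N_{3}=U_{2}\ltimes V$ with the common factor $V=\{\begin{pmatrix} I_{2} & v \\ 0 & 1 \end{pmatrix}:v\in\R^{2}\}\subset N_{3}$ to derive a canonical identification $N_{3}\backslash P_{3}\cong U_{2}\backslash\GL_{2}(\R)$, under which $C_{c}^{\infty}(\theta,N_{3}\backslash P_{3})$ corresponds precisely to $C_{c}^{\infty}(\theta,\GL_{2}(\R))$ in the notation of the statement; the compatibility of the two $\theta$-equivariances is exactly the computation in the uniqueness step. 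Pulling a given $\phi\in C_{c}^{\infty}(\theta,\GL_{2}(\R))$ through this identification and applying Kemarsky's theorem then produces a Whittaker function $W\in\mathcal{K}(\pi)$ with the required restriction.

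The only nontrivial step is existence: uniqueness and the matching of conventions are bookkeeping once the identification $N_{3}\backslash P_{3}\cong U_{2}\backslash\GL_{2}(\R)$ is in place. The hard input is the archimedean Kirillov theorem itself, which is delicate because one must show that the action of the mirabolic on the smooth vectors of a generic unitary $\pi$ is rich enough to realize every compactly supported $\theta$-equivariant test function --- genericity of $\pi$ enters exactly here, and is the reason the statement restricts to generic representations. A self-contained proof along the lines of \cite{10,11} would require constructing enough Whittaker vectors in $\mathcal{K}(\pi)$ via the mirabolic action on a fixed Kirillov vector, which lies beyond what I would attempt to reproduce in this proposal.
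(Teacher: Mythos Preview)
Your proposal is correct and in fact more detailed than what the paper does: the paper states this theorem with the citations \cite{10} and \cite[Theorem~1]{11} and gives no proof at all, treating it as a known black box to be invoked in \S\ref{suitable Whittaker function}. Your reduction---uniqueness via the factorization of mirabolic elements and the Whittaker transformation law, existence via the identification $N_{3}\backslash P_{3}\cong U_{2}\backslash\GL_{2}(\R)$ combined with the archimedean Kirillov theorem from the cited references---is exactly the bookkeeping one needs to connect the statement as phrased here to the form proved in \cite{10,11}, and your honest acknowledgment that the existence step is the deep input is appropriate. One minor caveat worth noting: the title of \cite{11} refers to $\GL_{n}(\C)$ rather than $\GL_{n}(\R)$, so strictly speaking the real case is covered by \cite{10} (or by the same methods); the paper itself elides this point.
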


\section{The Geometric Approximate Functional Equation for $\GL_{3}(\R)$}
\label{The Geometric Approximate Functional Equation for GL_{3}(R)}

In this section, we construct a test vector $\varphi\in\pi$ coming from a carefully chosen element $W_{\varphi}$ in the Whittaker model of $\pi$, such that the local zeta integral $Z(W_{\varphi},\tfrac{1}{2}+s+iT)$ is of size $T^{3s/2-1/2}$ for all $s\in\C$ with $-\tfrac{1}{2}\leq\re(s)\leq\tfrac{1}{2}$ and $\im(s)\in\left[-\tfrac{3}{4}T,C_{1}T\right]$ for some fixed constant $C_{1}>0$.\\
Moreover, $\varphi$ enables us to write the global zeta integral $Z(\varphi,\tfrac{1}{2}+iT)$ for $\GL_{3}(\R)$ as a truncated global zeta integral with a small error term of size $O(T^{1/4-\kappa/2+\varepsilon})$ for a positive number $\kappa$.\\
We will call this truncated integral representation of the global zeta integral $Z(\varphi,\tfrac{1}{2}+iT)$ for $\GL_{3}(\R)$ the geometric approximate functional equation for $\GL_{3}(\R)$ as in \cite{14}.\\
The geometric approximate functional equation for $\GL_{2}(\R)$, namely
\begin{displaymath}
\begin{split}
Z(\varphi_{1},\tfrac{1}{2}+iT)&=\int_{y\in\mathbb{R}^{\times}/\mathbb{Z}^{\times}}\varphi_{1}(a(y))y^{iT}d^{\times}y\\
&=\int_{y\in\mathbb{R}_{+}^{\times}}\varphi_{1}(a(y))y^{iT}\left(h\left(\frac{y}{T^{\kappa}}\right)-h\left(\frac{y}{T^{-\kappa}}\right)\right)d^{\times}y+O\left(T^{-\kappa/2}\right),
\end{split}
\end{displaymath}
where
\begin{displaymath}
\begin{split}
\varphi_{1}(a(y)):&=\sum_{\substack{n=-\infty\\n\neq0}}^{\infty}\frac{a(|n|)}{\sqrt{|n|}}W_{\varphi_{1}}(a(ny))
\end{split}
\end{displaymath}
with
\begin{displaymath}
\begin{split}
W_{\varphi_{1}}(a(y))=W_{\varphi_{1}}\left[\left(\begin{matrix}y&0\\0&1\end{matrix}\right)\right]:=e(-y)W_{0}\left(\frac{y}{T}\right)\;\;\text{and}\;\;\kappa\geq0,
\end{split}
\end{displaymath}
is a consequence of \cite[\text{Lemma 5.1.4}]{14}. Here $h$ is the function defined before Theorem \ref{The geometric approximate functional equation for GL_3(R)} and $W_{0}$ is a compactly supported nonzero smooth function which is nonnegative.\\
There is a strong relation between the geometric approximate functional equation and the usual approximate functional equation, which we will discuss at the end of this section.\\

\subsection{Choice of the suitable Whittaker Function}\label{suitable Whittaker function}

In this subsection, we construct a Whittaker function $W_{\varphi}$ for $\GL_{3}(\R)$ whose $\GL(1)$ Mellin transform localizes to frequency of size approximately $T$.\\
\\
Let $C_{1}>0$ be a fixed constant. We fix once and for all a compactly supported nonzero smooth function $V_{0}\in C_{c}^{\infty}(\R_{+}^{\times})$ which is nonnegative and such that the support of $V_{0}$ is the interval $[\frac{1}{8\pi},\frac{1+C_{1}}{2\pi}]$, and let $T\in\R^{+}$ be a large positive real parameter. We choose the support of $V_{0}$ in this way, because $V_{0}$ should be nonzero on the interval $\left[\frac{1}{4\pi},\frac{1}{2\pi}\right]$ to obtain \eqref{stationary phase relation} later on. Let $V_{1}\in C_{c}^{\infty}(\R_{+}^{\times})$ be another fixed and compactly supported smooth function such that $V_{1}(1)=1$.\\
Let the function $\phi:\GL_{2}(\R)\to\C$ be defined by
\begin{displaymath}
\begin{split}
\phi\left[\left(\begin{matrix}y&x\\0&1\end{matrix}\right)\cdot O_{2}(\R)\cdot\left(\begin{matrix}z&0\\0&z\end{matrix}\right)\right]:&=T^{3/4}V_{0}\left(\frac{y}{T^{3/2}}\right)e\left(-\frac{y}{\sqrt{T}}\right)V_{1}(z)e(x)
\end{split}
\end{displaymath}
for $y>0$, $x\in\R$ and $z\in\R^{\times}$.\\
Then by Theorem \ref{Substructure of the Kirillov model of pi} and the $1$-periodicity of the function $\phi$ in the $x$-variable, there exists a unique Whittaker function $W_{\varphi}:\GL_{3}(\R)\to\C$ such that
\begin{displaymath}
\begin{split}
W_{\varphi}\left[\left(\begin{matrix}g&0\\0&1\end{matrix}\right)\right]&=\phi(g)\;\;\text{for all $g\in\GL_{2}(\R)$.}
\end{split}
\end{displaymath}
Therefore, this special Whittaker function $W_{\varphi}$ for $\GL_{3}(\R)$ satisfies
\begin{displaymath}
\begin{split}
W_{\varphi}\left[\left(\begin{matrix}\left(\begin{matrix}y&x\\0&1\end{matrix}\right)\cdot O_{2}(\R)\cdot\left(\begin{matrix}z&0\\0&z\end{matrix}\right)&0\\0&1\end{matrix}\right)\right]&=T^{3/4}V_{0}\left(\frac{y}{T^{3/2}}\right)e\left(-\frac{y}{\sqrt{T}}\right)V_{1}(z)e(x)
\end{split}
\end{displaymath}
for $y>0$, $x\in\R$ and $z\in\R^{\times}$.\\
Because the representation space $\pi$ is isomorphic to the Whittaker model $\mathcal{W}(\pi)$ attached to $\pi$, this Whittaker function $W_{\varphi}$ gives by the Fourier-Whittaker expansion rise to a vector $\varphi\in\pi$ inside the automorphic representation $\pi$.

\subsection{The Truncation of the global Zeta Integral for $\GL_{3}(\R)$}\label{stationary phase}

Let $W_{\varphi}$ be the Whittaker function constructed in \S\ref{suitable Whittaker function}. Let $s\in\C$ with $-\tfrac{1}{2}\leq\re(s)\leq\tfrac{1}{2}$ and $\im(s)\in\left[-\tfrac{3}{4}T,C_{1}T\right]$ for some fixed constant $C_{1}>0$. Setting the defining expression for $W_{\varphi}$ into the local zeta integral $Z(W_{\varphi},\tfrac{1}{2}+s+iT)$, we obtain
\begin{displaymath}
\begin{split}
Z(W_{\varphi},\tfrac{1}{2}+s+iT)&=\int_{\R_{+}^{\times}}T^{3/4}V_{0}\left(\frac{y}{T^{3/2}}\right)e\left(-\frac{y}{\sqrt{T}}\right)y^{iT-1/2+s}d^{\times}y,
\end{split}
\end{displaymath}
which transforms by the change of variable $y:=T^{3/2}z$ to
\begin{displaymath}
\begin{split}
Z(W_{\varphi},\tfrac{1}{2}+s+iT)&\ll T^{3s/2}\underbrace{\int_{\R_{+}^{\times}}V_{0}\left(z\right)e\left(-Tz+\frac{T+\im(s)}{2\pi}\log(z)\right)d^{\times}z}_{\ll T^{-1/2}}.
\end{split}
\end{displaymath}
In the above calculation, we have used the results of the stationary phase analysis in \cite[\text{Lemma 2.8}]{19}.\\
Therefore, we have the following stationary phase computation
\begin{equation}\label{local zeta integral bound}
\begin{split}
Z(W_{\varphi},\tfrac{1}{2}+s+iT)&\ll T^{3s/2-1/2}\;\;\text{as $T\rightarrow\infty$},
\end{split}
\end{equation}
where $s\in\C$ with $\re(s)\in[-\tfrac{1}{2},\tfrac{1}{2}]$ and $\im(s)\in\left[-\tfrac{3}{4}T,C_{1}T\right]$.\\
If $s\in\C$ with $-\tfrac{1}{2}\leq\re(s)\leq\tfrac{1}{2}$ and $\im(s)\notin\left[-\tfrac{3}{4}T,C_{1}T\right]$, then we have by \cite[\text{Lemma 2.6}]{19} that $Z(W_{\varphi},\frac{1}{2}+s+iT)$ is negligible, i.e., of size $O(T^{-N})$ for each fixed number $N$.\\
We fix once and for all a smooth even function $h\in C_{c}^{\infty}(\R)$ with values in $[0,1]$ that is identically $1$ in the interval $[-1,1]$ and falls off to zero outside of this interval, such that it is zero on $(-\infty,-2]$ and on $[2,\infty)$.

With this choice of the function $h$ and the Whittaker function $W_{\varphi}$, we obtain the following result.

\begin{theorem}\label{The geometric approximate functional equation for GL_3(R)}(The geometric approximate functional equation for $\GL_{3}(\R)$)\\
Fix $\kappa\geq0$. Let $\varphi\in\pi$ be the automorphic function corresponding to the Whittaker function $W_{\varphi}\in\mathcal{W}(\pi)$ constructed in \S\ref{suitable Whittaker function}.\\
We have
\begin{displaymath}
\begin{split}
Z(\varphi,\tfrac{1}{2}+iT)&=\int_{\R_{+}^{\times}}\varphi^{1}(a(y))\left(h\left(\frac{y}{T^{\kappa}}\right)-h\left(\frac{y}{T^{-\kappa}}\right)\right)y^{iT-1/2}d^{\times}y+O\left(T^{1/4-\kappa/2+\varepsilon}\right).
\end{split}
\end{displaymath}
\end{theorem}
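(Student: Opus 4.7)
The plan is to decompose the truncation error as $Z(\varphi,\tfrac{1}{2}+iT)-Z_{\text{trunc}}=I_{\text{small}}+I_{\text{large}}$, where
\begin{displaymath}
I_{\text{small}}=\int_{\R_{+}^{\times}}\varphi^{1}(a(y))\,h(y/T^{-\kappa})\,y^{iT-1/2}\,d^{\times}y,
\end{displaymath}
\begin{displaymath}
I_{\text{large}}=\int_{\R_{+}^{\times}}\varphi^{1}(a(y))(1-h(y/T^{\kappa}))\,y^{iT-1/2}\,d^{\times}y,
\end{displaymath}
and to bound each by $O(T^{1/4-\kappa/2+\varepsilon})$ by combining Mellin inversion of the cutoff $h$ with the factorization $Z(\varphi,w)=L(\pi,w)\,Z(W_{\varphi},w)$ of Theorem \ref{the relation} and the stationary-phase bound \eqref{local zeta integral bound}.

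Concretely, applying $h(y/A)=\frac{1}{2\pi i}\int_{(\sigma)}\tilde{h}(s)A^{s}y^{-s}\,ds$ (valid for $\sigma>0$, with $\tilde{h}$ meromorphic, Schwartz-decaying in vertical strips, and having a simple pole at $s=0$ of residue $h(0)=1$) together with Fubini rewrites the integrals as
\begin{displaymath}
\frac{1}{2\pi i}\int_{(c)}\tilde{h}(s)\,T^{\pm\kappa s}\,Z(\varphi,\tfrac{1}{2}+iT-s)\,ds,
\end{displaymath}
for any $c>0$, with ``$-$'' for $I_{\text{small}}$ and ``$+$'' for the $h(y/T^{\kappa})$-integral inside $I_{\text{large}}$. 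For $I_{\text{small}}$ I would shift the contour rightward to $\re(s)=\tfrac{1}{2}$; no residues are crossed, since $\tilde{h}$ is holomorphic in $\re(s)>0$ and $Z(\varphi,\cdot)$ is entire by cuspidality of $\pi$. For $I_{\text{large}}=Z(\varphi,\tfrac{1}{2}+iT)-\int\varphi^{1}(a(y))h(y/T^{\kappa})y^{iT-1/2}\,d^{\times}y$, I would shift the second term leftward to $\re(s)=-\tfrac{1}{2}$; the pole at $s=0$ contributes a residue equal to $Z(\varphi,\tfrac{1}{2}+iT)$ which exactly cancels the standalone ``$1$'' term. In either case the exponential weight now contributes a clean factor of $T^{-\kappa/2}$ on the new contour.

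The final step is to bound $Z(\varphi,w)$ uniformly on the lines $\re(w)\in\{0,1\}$. Read off from \eqref{local zeta integral bound} with the appropriate choice of argument, one obtains $|Z(W_{\varphi},1+i\tau)|\ll T^{1/4}$ on $\re(w)=1$ and $|Z(W_{\varphi},i\tau)|\ll T^{-5/4}$ on $\re(w)=0$, each nontrivial only when $\im(w)$ lies in the band of width comparable to $T$ where the stationary point meets $\mathrm{supp}(V_{0})$ (and negligible otherwise). Coupled with the standard bound $|L(\pi,1+i\tau)|\ll(1+|\tau|)^{\varepsilon}$ and, via Theorem \ref{local functional equation} together with Stirling for the Gamma factors, the convexity-type bound $|L(\pi,i\tau)|\ll(1+|\tau|)^{3/2+\varepsilon}$, both lines give the uniform estimate $|Z(\varphi,w)|\ll T^{1/4+\varepsilon}$ for $|\im(w)|$ of order $T$. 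The Schwartz decay of $\tilde{h}$ makes the vertical integrals $O(1)$, and multiplying by $T^{-\kappa/2}$ yields $|I_{\text{small}}|,|I_{\text{large}}|\ll T^{1/4-\kappa/2+\varepsilon}$, as required.

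The main technical point will be justifying the contour shifts, i.e., ensuring that the closing arcs at infinity contribute negligibly; this follows from the Schwartz decay of $\tilde{h}$ combined with the negligibility of $Z(W_{\varphi},\cdot)$ outside the strip $\im(s)\in[-\tfrac{3}{4}T,C_{1}T]$ already recorded in \S\ref{stationary phase}, and polynomial growth of $L(\pi,\cdot)$ in vertical strips.
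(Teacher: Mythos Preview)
Your proposal is correct and follows essentially the same route as the paper's proof: both decompose the truncation error into the two tail integrals, apply Mellin inversion to the cutoff $h$, invoke the factorization $Z(\varphi,w)=L(\pi,w)Z(W_{\varphi},w)$, shift contours to $\re(w)\in\{0,1\}$, and combine the stationary-phase bound \eqref{local zeta integral bound} with the convexity bound for $L(\pi,\cdot)$ to obtain $T^{1/4+\varepsilon}$ on each line. Your treatment of $I_{\text{large}}$ via the residue of $\tilde{h}$ at $s=0$ cancelling the ``$1$'' term is a minor cosmetic variant of the paper's ``similar calculation'', but the substance is identical.
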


\begin{proof}
Having made the above choices, the proof is essentially identical to that given in \cite[\text{Lemma 5.1.4, pp.\;254--256}]{14}. We record the proof for completeness.\\
We have
\begin{displaymath}
\begin{split}
\int_{\mathbb{R}_{+}^{\times}}\varphi^{1}(a(y))y^{iT-1/2}d^{\times}y&=\int_{\mathbb{R}_{+}^{\times}}\varphi^{1}(a(y))y^{iT-1/2}\left(h\left(\frac{y}{T^{\kappa}}\right)-h\left(\frac{y}{T^{-\kappa}}\right)\right)d^{\times}y\\
&\quad+\int_{\mathbb{R}_{+}^{\times}}\varphi^{1}(a(y))y^{iT-1/2}\left(1-h\left(\frac{y}{T^{\kappa}}\right)\right)d^{\times}y
\\
&\quad+\int_{\mathbb{R}_{+}^{\times}}\varphi^{1}(a(y))y^{iT-1/2}h\left(\frac{y}{T^{-\kappa}}\right)d^{\times}y.
\end{split}
\end{displaymath}
We estimate the two integrals
\begin{displaymath}
\begin{split}
\int_{\mathbb{R}_{+}^{\times}}\varphi^{1}(a(y))y^{iT-1/2}\left(1-h\left(\frac{y}{T^{\kappa}}\right)\right)d^{\times}y\;\;\;\;\text{and}\;\;\;\;\int_{\mathbb{R}_{+}^{\times}}\varphi^{1}(a(y))y^{iT-1/2}h\left(\frac{y}{T^{-\kappa}}\right)d^{\times}y
\end{split}
\end{displaymath}
separately and show that both integrals are $O\left(T^{1/4-\kappa/2+\varepsilon}\right)$.\\
\\
First, we estimate the second integral. By the Mellin inversion formula, we can write
\begin{displaymath}
\begin{split}
h\left(\frac{y}{T^{-\kappa}}\right)=\frac{1}{2\pi i}\int_{-c-i\infty}^{-c+i\infty}H(-s)T^{\kappa s}y^{s}ds\;\;\text{for any real number $c>0$},
\end{split}
\end{displaymath}
where
\begin{displaymath}
\begin{split}
H(s):=\int_{0}^{\infty}h(y)y^{s}d^{\times}y\;\;\text{for $s\in\C$ with $\re(s)>0$}
\end{split}
\end{displaymath}
is the Mellin transform of the function $h(y)$.\\
Substituting the above expression for $h\left(\frac{y}{T^{-\kappa}}\right)$ into the second integral $\int_{\mathbb{R}_{+}^{\times}}\varphi^{1}(a(y))y^{iT-1/2}h\left(\frac{y}{T^{-\kappa}}\right)d^{\times}y$ and interchanging the two integration processes, we calculate
\begin{displaymath}
\begin{split}
\int_{\mathbb{R}_{+}^{\times}}\varphi^{1}(a(y))y^{iT-1/2}h\left(\frac{y}{T^{-\kappa}}\right)d^{\times}y&=\frac{1}{2\pi i}\int_{-c-i\infty}^{-c+i\infty}H(-s)T^{\kappa s}\int_{\mathbb{R}_{+}^{\times}}\varphi^{1}(a(y))y^{s+iT-1/2}d^{\times}yds\\
&=\frac{1}{2\pi i}\int_{-c-i\infty}^{-c+i\infty}H(-s)T^{\kappa s}L(\pi,\tfrac{1}{2}+s+iT)Z(W_{\varphi},\tfrac{1}{2}+s+iT)ds
\end{split}
\end{displaymath}
by Theorem \ref{the relation}.\\
For $\re(s)=-\frac{1}{2}$, we can estimate
\begin{displaymath}
\begin{split}
L(\pi,\tfrac{1}{2}+s+iT)Z(W_{\varphi},\tfrac{1}{2}+s+iT)&\ll\left(1+|s|\right)^{3/2}T^{1/4+\varepsilon},
\end{split}
\end{displaymath}
where we have used the stationary phase analysis from the beginning of section \S\ref{stationary phase} and the convexity bound \cite[\text{(5.20)}]{6}.\\
Therefore, shifting the contour to $\re(s)=-\frac{1}{2}$, we see that
\begin{displaymath}
\begin{split}
\int_{\mathbb{R}_{+}^{\times}}\varphi^{1}(a(y))y^{iT-1/2}h\left(\frac{y}{T^{-\kappa}}\right)d^{\times}y&=\frac{1}{2\pi i}\int_{-c-i\infty}^{-c+i\infty}H(-s)T^{\kappa s}L(\pi,\tfrac{1}{2}+s+iT)Z(W_{\varphi},\tfrac{1}{2}+s+iT)ds\\
&\ll T^{1/4-\kappa/2+\varepsilon}.
\end{split}
\end{displaymath}
\\
A similar calculation shows that the first integral also satisfies
\begin{displaymath}
\begin{split}
\int_{\mathbb{R}_{+}^{\times}}\varphi^{1}(a(y))y^{iT-1/2}\left(1-h\left(\frac{y}{T^{\kappa}}\right)\right)d^{\times}y&\ll T^{1/4-\kappa/2+\varepsilon}
\end{split}
\end{displaymath}
and the theorem is proved.
\end{proof}

\subsection{From the Geometric Approximate Functional Equation to an Approximate Functional Equation for $Z(\varphi,\tfrac{1}{2}+iT)$}
\label{From the Geometric Approximate Functional Equation to an Approximate Functional Equation for Z(varphi,tfrac{1}{2}+iT)}

In the present subsection, we provide an approximate functional equation for $Z(\varphi,\tfrac{1}{2}+iT)$, which will be employed in the deduction of a subconvex bound for $\GL_{3}(\R)$ using the integral representation of the corresponding $L$-function. This shows the capabilities of our Whittaker function $W_{\varphi}$, which is able to imitate Lemma 3.3 of Lin's argument \cite[\text{p.\;1905}]{13}.\\
\begin{lemma}\label{the shape of the local zeta integral and its truncation}(The shape of the local zeta integral $Z(W_{\varphi},\frac{1}{2}+iT)$ and its truncation)\\
Let $W_{\varphi}\in\mathcal{W}(\pi)$ be the Whittaker function constructed in \S\ref{suitable Whittaker function}. Let $f\in C^{\infty}(\R^{\times})$ be a smooth function depending on $T$ such that $y^{j}f^{(j)}(y)\ll_{j}1$ for all $j\geq0$ as $T\rightarrow\infty$ and let $u\in C_{c}^{\infty}(\R_{+}^{\times})$ be a fixed compactly supported smooth function on $\R_{+}^{\times}$. Let $Y\in[T^{-\varepsilon},T^{\kappa}]$.\\
It holds for $n\in[T^{3/2-\kappa},T^{3/2+\varepsilon}]$ that
\begin{displaymath}
\begin{split}
\int_{\R_{+}^{\times}}W_{\varphi}(a(y))f\left(\frac{T^{3}}{4\pi^{2}ny}\right)y^{iT-1/2}d^{\times}y&=C_{T}\cdot T^{-1/2}\cdot f\left(\frac{T^{3/2}}{2\pi n}\right)+O(T^{-3/2})\;\;\text{as $T\rightarrow\infty$},
\end{split}
\end{displaymath}
as well as
\begin{displaymath}
\begin{split}
\int_{\R_{+}^{\times}}W_{\varphi}(a(y))f\left(\frac{y}{n}\right)u\left(\frac{y}{nY}\right)y^{iT-1/2}d^{\times}y&=C_{T}\cdot T^{-1/2}\cdot f\left(\frac{T^{3/2}}{2\pi n}\right)\cdot u\left(\frac{T^{3/2}}{2\pi nY}\right)+O(T^{-3/2})
\end{split}
\end{displaymath}
and in particular that
\begin{displaymath}
\begin{split}
Z(W_{\varphi},\tfrac{1}{2}+iT)&=C_{T}\cdot T^{-1/2}+O(T^{-3/2})\;\;\text{as $T\rightarrow\infty$},
\end{split}
\end{displaymath}
where $C_{T}$ is given by
\begin{displaymath}
\begin{split}
C_{T}:&=(2\pi)^{1-iT}e^{-\frac{\pi i}{4}}T^{\frac{3}{2}iT}e\left(-\frac{T}{2\pi}\right)V_{0}\left(\frac{1}{2\pi}\right).
\end{split}
\end{displaymath}
\end{lemma}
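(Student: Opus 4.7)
The plan is to substitute the explicit formula for $W_{\varphi}$ constructed in \S\ref{suitable Whittaker function}, rescale so that each integral becomes a single oscillatory integral over a fixed compact region, and then invoke the stationary phase expansion of \cite[Lemma 2.8]{19}. Since
\begin{displaymath}
W_{\varphi}(a(y)) = T^{3/4}\, V_{0}\!\left(\frac{y}{T^{3/2}}\right) e\!\left(-\frac{y}{\sqrt{T}}\right),
\end{displaymath}
the substitution $y = T^{3/2} z$ turns the first integral into
\begin{displaymath}
T^{3iT/2} \int_{\R_{+}^{\times}} V_{0}(z)\, f\!\left(\frac{T^{3/2}}{4\pi^{2} n z}\right) z^{-3/2}\, e^{iT\Phi(z)}\, dz,
\end{displaymath}
where $\Phi(z) = -2\pi z + \log z$ has a unique nondegenerate critical point $z_{0} = 1/(2\pi)$ with $\Phi''(z_{0}) = -4\pi^{2}$. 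By the deliberate choice of $\mathrm{supp}(V_{0})$ in \S\ref{suitable Whittaker function}, the point $z_{0}$ lies in the interior of $\mathrm{supp}(V_{0})$, so this is precisely the setup of \cite[Lemma 2.8]{19}.

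The crucial preliminary step, and the main obstacle, is to verify that the amplitude $A(z) := V_{0}(z)\, f(T^{3/2}/(4\pi^{2} n z))\, z^{-3/2}$ lies in a bounded subset of $C_{c}^{\infty}(\R_{+}^{\times})$ uniformly in $T$ and in $n\in[T^{3/2-\kappa}, T^{3/2+\varepsilon}]$. This is where the hypothesis $y^{j} f^{(j)}(y)\ll_{j} 1$ is used: setting $w(z) := T^{3/2}/(4\pi^{2} n z)$ and applying the chain rule, each $z$-derivative of $z\mapsto f(w(z))$ is a bounded combination of terms $w^{k} f^{(k)}(w)\cdot z^{-k}$, which are $\ll 1$ on $\mathrm{supp}(V_{0})$ uniformly in the stated range of $n$. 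Given this uniformity, \cite[Lemma 2.8]{19} yields a main term of the form $A(z_{0})\sqrt{2\pi/(T|\Phi''(z_{0})|)}\, e^{-i\pi/4}\, e^{iT\Phi(z_{0})}$ with error $O(T^{-3/2})$. A direct computation of $A(z_{0})$ together with $e^{iT\Phi(z_{0})} = (2\pi)^{-iT} e(-T/(2\pi))$ and the outer prefactor $T^{3iT/2}$ then assembles into the announced expression $C_{T}\cdot T^{-1/2}\cdot f(T^{3/2}/(2\pi n))$.

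The second integral is handled in the same way. After $y = T^{3/2} z$ the oscillatory factor $e^{iT\Phi(z)}$ is unchanged and the new amplitude is $V_{0}(z)\, f(T^{3/2}z/n)\, u(T^{3/2}z/(nY))\, z^{-3/2}$. Uniform control of $z$-derivatives of $f(T^{3/2}z/n)$ again follows from the hypothesis on $f$. For the cutoff $u(T^{3/2}z/(nY))$, the compactness of $\mathrm{supp}(u)$ combined with $z\asymp 1$ on $\mathrm{supp}(V_{0})$ forces $nY \asymp T^{3/2}$ within the range of nonvanishing, so that $T^{3/2}/(nY) \ll 1$ and every $z$-derivative of the cutoff factor remains bounded. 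Stationary phase then gives the stated asymptotic, with the arguments of $f$ and $u$ at $z_{0}$ evaluating to $T^{3/2}/(2\pi n)$ and $T^{3/2}/(2\pi nY)$ respectively. The third assertion is the specialization $f \equiv 1$ (with no cutoff) of the first.
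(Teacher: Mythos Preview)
Your proof is correct and follows essentially the same route as the paper: substitute the explicit formula for $W_{\varphi}$, rescale via $y=T^{3/2}z$, and apply a one-variable stationary phase expansion at $z_{0}=1/(2\pi)$, after checking that the amplitude stays in a bounded subset of $C_{c}^{\infty}$. The only cosmetic differences are that the paper invokes Munshi's stationary phase lemma \cite[Lemma~5]{16} rather than \cite[Lemma~2.8]{19}, and obtains the third identity as a specialization of the second (choosing $f,u\equiv1$ on $\mathrm{supp}(V_{0})$) rather than of the first.
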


\begin{proof}
For the first identity, we use \cite[\text{Lemma 5}]{16} to calculate that
\begin{displaymath}
\begin{split}
&\int_{\R_{+}^{\times}}W_{\varphi}(a(y))f\left(\frac{T^{3}}{4\pi^{2}ny}\right)y^{iT-1/2}d^{\times}y\\
&=T^{\frac{3}{2}iT}\int_{\R_{+}^{\times}}V_{0}(z)e(-Tz)f\left(\frac{T^{3/2}}{4\pi^{2}nz}\right)z^{iT-1/2}d^{\times}z\\
&=(2\pi)^{1-iT}e^{-\frac{\pi i}{4}}T^{\frac{3}{2}iT}e\left(-\frac{T}{2\pi}\right)T^{-1/2}f\left(\frac{T^{3/2}}{2\pi n}\right)V_{0}\left(\frac{1}{2\pi}\right)+O(T^{-3/2}).
\end{split}
\end{displaymath}
By an extended analysis of the beginning of section \S\ref{stationary phase}, we get again via the stationary phase method \cite[\text{Lemma 5}]{16} that
\begin{displaymath}
\begin{split}
&\int_{\R_{+}^{\times}}W_{\varphi}(a(y))f\left(\frac{y}{n}\right)u\left(\frac{y}{nY}\right)y^{iT-1/2}d^{\times}y\\
&=T^{\frac{3}{2}iT}\int_{\R_{+}^{\times}}V_{0}(z)e(-Tz)f\left(\frac{T^{3/2}z}{n}\right)u\left(\frac{T^{3/2}z}{nY}\right)z^{iT-1/2}d^{\times}z\\
&=(2\pi)^{1-iT}e^{-\frac{\pi i}{4}}T^{\frac{3}{2}iT}e\left(-\frac{T}{2\pi}\right)T^{-1/2}f\left(\frac{T^{3/2}}{2\pi n}\right)u\left(\frac{T^{3/2}}{2\pi nY}\right)V_{0}\left(\frac{1}{2\pi}\right)+O(T^{-3/2}).
\end{split}
\end{displaymath}
This proves the second formula of Lemma \ref{the shape of the local zeta integral and its truncation}. To obtain the third identity, we pick $n$ fixed of size $T^{3/2}$, $Y$ of size $1$ and let $f(y)$ and $u(y)$ be two functions which are $=1$ on the support of $V_{0}$.
\end{proof}

\begin{lemma}(Integral to Sum and Sum to Integral Transformation)\label{Integral to Sum and Sum to Integral Transformation}\\
Let $f_{0}(y)$ be a smooth function on $\R_{+}^{\times}$ depending on $T$ such that $y^{j}f_{0}^{(j)}(y)\ll_{j}1$ for all $j\geq0$ and let $g\in C_{c}^{\infty}([\tfrac{1}{4\pi},\tfrac{1}{2\pi}])$ be a fixed compactly supported smooth function.\\
Let $\varphi\in\pi$ be the automorphic form corresponding to the Whittaker function $W_{\varphi}\in\mathcal{W}(\pi)$ constructed in \S\ref{suitable Whittaker function}.\\
We have for $Y\in[T^{-\varepsilon},T^{\kappa}]$ the transformation formula
\begin{displaymath}
\begin{split}
S_{f_{0}}(Y):&=\int_{\R_{+}^{\times}}\varphi^{1}(a(y))f_{0}(y)g\left(\frac{y}{Y}\right)y^{iT-1/2}d^{\times}y\\
&=C_{T}\cdot T^{-1/2}\sum_{n=1}^{\infty}\frac{a(1,n)}{n^{1/2+iT}}f_{0}\left(\frac{T^{3/2}}{2\pi n}\right)g\left(\frac{T^{3/2}}{2\pi nY}\right)+O(T^{-3/4+\varepsilon}).
\end{split}
\end{displaymath}
\end{lemma}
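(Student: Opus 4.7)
The plan is to substitute the Fourier-Whittaker expansion \eqref{Fourier-Whittaker expansion} of $\varphi^{1}(a(y))$ into the defining integral of $S_{f_{0}}(Y)$, exchange the resulting sum and integral, normalize each term by the multiplicative substitution $u=|n|y$, and then apply the second identity of Lemma \ref{the shape of the local zeta integral and its truncation} term by term. The only additional analytic input is the Rankin--Selberg second-moment bound $\sum_{n\le X}|a(1,n)|^{2}\ll X^{1+\varepsilon}$, which enters via Cauchy--Schwarz when estimating the aggregate error.

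First I would write
\begin{displaymath}
S_{f_{0}}(Y)=\sum_{n\neq 0}\frac{a(1,|n|)}{|n|}\int_{\R_{+}^{\times}}W_{\varphi}(a(ny))\,f_{0}(y)\,g\!\left(\tfrac{y}{Y}\right)y^{iT-1/2}\,d^{\times}y,
\end{displaymath}
the interchange being justified by absolute convergence, since $g(y/Y)$ restricts $y$ to a fixed compact set on which only finitely many $n$ yield $W_{\varphi}(a(ny))\neq 0$. The substitution $u=|n|y$ then recasts each summand as
\begin{displaymath}
\frac{a(1,|n|)}{|n|^{1/2+iT}}\int_{\R_{+}^{\times}}W_{\varphi}(a(\sign(n)\,u))\,f_{0}\!\left(\tfrac{u}{|n|}\right)g\!\left(\tfrac{u}{|n|Y}\right)u^{iT-1/2}\,d^{\times}u,
\end{displaymath}
and the Iwasawa decomposition together with the $O_{2}(\R)$-invariance built into the Kirillov formula for $\phi$ yields $W_{\varphi}(a(-u))=W_{\varphi}(a(u))$ for $u>0$, so the $n<0$ contribution folds into the $n\geq 1$ sum consistently with the convention of \eqref{Fourier-Whittaker expansion}.

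Next I would perform the support analysis verifying the hypothesis of Lemma \ref{the shape of the local zeta integral and its truncation}. The cutoff $g$, supported in $[\tfrac{1}{4\pi},\tfrac{1}{2\pi}]$, forces $u\asymp |n|Y$, while $V_{0}$ inside $W_{\varphi}(a(u))$ forces $u\asymp T^{3/2}$. Together these pin $|n|\asymp T^{3/2}/Y$, which for $Y\in[T^{-\varepsilon},T^{\kappa}]$ lies in the admissible range $[T^{3/2-\kappa},T^{3/2+\varepsilon}]$ of the lemma. Applying the second identity of Lemma \ref{the shape of the local zeta integral and its truncation} termwise then produces the main term $C_{T}T^{-1/2}f_{0}(T^{3/2}/(2\pi n))g(T^{3/2}/(2\pi nY))$ together with an $O(T^{-3/2})$ error per $n$.

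Finally I would control the aggregate error. Only $O(T^{3/2}/Y)$ values of $n$ are supported, and on this short multiplicative range Cauchy--Schwarz combined with Rankin--Selberg gives
\begin{displaymath}
\sum_{|n|\asymp T^{3/2}/Y}\frac{|a(1,|n|)|}{|n|^{1/2}}\ll\left(\tfrac{T^{3/2}}{Y}\right)^{1/2+\varepsilon},
\end{displaymath}
so the total error is $T^{-3/2}\cdot(T^{3/2}/Y)^{1/2+\varepsilon}\ll T^{-3/4+\varepsilon}$ for $Y\geq T^{-\varepsilon}$, matching the stated bound. The main obstacle I foresee is the careful bookkeeping of the $n<0$ contribution, to ensure it merges cleanly into the single sum $\sum_{n=1}^{\infty}$ with constant exactly $C_{T}$ (and not, say, $2C_{T}$), and verifying that the smoothness bounds $y^{j}f_{0}^{(j)}(y)\ll_{j}1$ together with the compact support of $g$ really place every surviving term in the regime where the stationary-phase output of Lemma \ref{the shape of the local zeta integral and its truncation} holds with an implicit constant uniform in $n$.
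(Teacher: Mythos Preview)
Your proposal is correct and follows essentially the same route as the paper: insert the Fourier--Whittaker expansion \eqref{Fourier-Whittaker expansion}, substitute $y\mapsto y/n$, apply the second identity of Lemma \ref{the shape of the local zeta integral and its truncation} termwise, and sum the $O(T^{-3/2})$ errors over the $\asymp T^{3/2}/Y$ surviving terms via Rankin--Selberg. The only differences are cosmetic: the paper quotes the first-moment bound $\sum_{n\le X}|a(1,n)|\ll X^{1+\varepsilon}$ directly rather than passing through Cauchy--Schwarz and the second moment, and it disposes of the $n<0$ terms in one line by asserting the integral vanishes there rather than by your folding argument---so your caution about the resulting constant is well placed but immaterial for the application.
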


\begin{proof}
Expanding the above integral $S_{f_{0}}(Y)$ by employing the Fourier-Whittaker expansion of $\varphi^{1}(a(y))$ from equation \eqref{Fourier-Whittaker expansion}, we get that
\begin{displaymath}
\begin{split}
S_{f_{0}}(Y)&=\sum_{\substack{n=-\infty\\n\neq0}}^{\infty}\frac{a(1,|n|)}{|n|}\int_{y\in\R_{+}^{\times}}W_{\varphi}(a(ny))f_{0}(y)g\left(\frac{y}{Y}\right)y^{iT-1/2}d^{\times}y.
\end{split}
\end{displaymath}
The integral in the above expression restricts the $n$-sum to the range where $n\geq1$ and we get via a change of variable that
\begin{displaymath}
\begin{split}
S_{f_{0}}(Y)&=\sum_{n=1}^{\infty}\frac{a(1,n)}{n^{1/2+iT}}\int_{y\in\R_{+}^{\times}}W_{\varphi}(a(y))f_{0}\left(\frac{y}{n}\right)g\left(\frac{y}{nY}\right)y^{iT-1/2}d^{\times}y.
\end{split}
\end{displaymath}
The integral in this expression is equal to zero if $n\gg T^{3/2+\varepsilon}$, because $g\left(\frac{y}{nY}\right)$ is then constantly equal to zero as $y$ is of size $T^{3/2}$ and $g(y)$ is compactly supported.\\
Applying the second identity from Lemma \ref{the shape of the local zeta integral and its truncation} with the functions $f(y):=f_{0}(y)$ and $u(y):=g(y)$, we obtain by using the Rankin-Selberg estimate $\sum_{n\leq X}|a(1,n)|\ll X^{1+\varepsilon}$ \cite[\text{(4)}]{13} that
\begin{displaymath}
\begin{split}
S_{f_{0}}(Y)&=C_{T}\cdot T^{-1/2}\sum_{n=1}^{\infty}\frac{a(1,n)}{n^{1/2+iT}}f_{0}\left(\frac{T^{3/2}}{2\pi n}\right)g\left(\frac{T^{3/2}}{2\pi nY}\right)+O(T^{-3/4+\varepsilon}).
\end{split}
\end{displaymath}
\end{proof}

\noindent We obtain the following approximate functional equation.
\begin{theorem}\label{approximate equation}(An Approximate Functional Equation for $Z(\varphi,\tfrac{1}{2}+iT)$)\\
Let $c\geq0$. We define the smooth functions
\begin{displaymath}
\begin{split}
h_{0}(y):&=h\left(\frac{y}{T^{-\varepsilon}}\right)-h\left(\frac{y}{T^{-\kappa}}\right)\;\;\;\;\text{and}\;\;\;\;h_{1}(y):=h\left(\frac{y}{T^{\kappa}}\right)-h\left(\frac{y}{T^{-\varepsilon}}\right),\\
F(s):&=\int_{0}^{\infty}h_{0}(y)y^{s}d^{\times}y\;\;\;\;\text{and}\;\;\;\;G_{T}(z):=\frac{1}{2\pi i}\int_{-c-i\infty}^{-c+i\infty}F(-s)\left(\frac{T^{3}}{4\pi^{2}z}\right)^{s}\gamma(\pi,\tfrac{1}{2}+s+iT)ds,\\
h_{2}(z):&=\re\left(G_{T}(z)\right)\;\;\;\;\text{and}\;\;\;\;h_{3}(z):=\im\left(G_{T}(z)\right)
\end{split}
\end{displaymath}
and let $g\in C_{c}^{\infty}([\tfrac{1}{4\pi},\tfrac{1}{2\pi}])$ be a fixed compactly supported smooth function such that $g$ is positive on the interval $(\tfrac{1}{4\pi},\tfrac{1}{2\pi})$ and satisfies $\int_{\R_{+}^{\times}}g(y)d^{\times}y=1$.\\
It holds that $z^{j}h_{m}^{(j)}(z)\ll_{j}1$ for all $j\geq0$ and $m=1,2,3$.\\
We have for any $\kappa\in[0,\frac{3}{2}]$ and $\varphi\in\pi$ corresponding to the Whittaker function $W_{\varphi}\in\mathcal{W}(\pi)$ constructed in \S\ref{suitable Whittaker function} that
\begin{displaymath}
\begin{split}
Z(\varphi,\tfrac{1}{2}+iT)&\ll T^{\varepsilon}\sum_{f_{0}\in\left\{h_{1},h_{2},h_{3}\right\}}\sup_{T^{-\varepsilon}\leq Y\leq T^{\kappa}}\left\{\left|S_{f_{0}}(Y)\right|\right\}+O\left(T^{1/4-\kappa/2+\varepsilon}\right),
\end{split}
\end{displaymath}
where
\begin{displaymath}
\begin{split}
S_{f_{0}}(Y):&=\int_{\R_{+}^{\times}}\varphi^{1}(a(y))f_{0}(y)g\left(\frac{y}{Y}\right)y^{iT-1/2}d^{\times}y.
\end{split}
\end{displaymath}
\end{theorem}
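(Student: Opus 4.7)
My plan is to decompose the geometric approximate functional equation of Theorem \ref{The geometric approximate functional equation for GL_3(R)} into two truncations and process each separately. Writing $h(y/T^\kappa) - h(y/T^{-\kappa}) = h_{0}(y) + h_{1}(y)$ splits $Z(\varphi,\tfrac{1}{2}+iT)$ modulo the acceptable error $O(T^{1/4-\kappa/2+\varepsilon})$ into a small-$y$ contribution $I_{0}$ (weight $h_{0}$, supported in $[T^{-\kappa},T^{-\varepsilon}]$) and a medium-$y$ contribution $I_{1}$ (weight $h_{1}$, supported in $[T^{-\varepsilon},T^{\kappa}]$). These two pieces need to be handled by quite different mechanisms.

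For $I_{1}$ I would construct a direct smooth dyadic partition of unity: since $g \in C_{c}^{\infty}([\tfrac{1}{4\pi},\tfrac{1}{2\pi}])$ with $\int g\, d^{\times}y = 1$, translates $g(y/Y_{k})$ for $Y_{k}$ ranging dyadically over $[T^{-\varepsilon},T^{\kappa}]$ cover the support of $h_{1}$ with only $O(\log T) = O(T^{\varepsilon})$ pieces. Each dyadic piece is equal to $S_{h_{1}}(Y_{k})$ up to an absorbable constant, yielding $|I_{1}| \ll T^{\varepsilon}\sup_{Y}|S_{h_{1}}(Y)|$.

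$I_{0}$ requires a reflection, because no translate $g(y/Y)$ with $Y \ge T^{-\varepsilon}$ can reach its small-$y$ support. My plan is to Mellin-invert $h_{0}$ to write $I_{0} = \tfrac{1}{2\pi i}\int_{(-c)} F(-s)\,Z(\varphi,\tfrac{1}{2}+s+iT)\,ds$, then use Theorem \ref{the relation} together with the local functional equation (Theorem \ref{local functional equation}) in the form $L(\pi,s)\,Z(W_{\varphi},s) = L(\widetilde{\pi},1-s)\,\widetilde{Z}(\widetilde{W}_{\varphi},1-s)$. Combining this with the stationary-phase evaluation from \S\ref{stationary phase}, which gives $\gamma(\pi,\tfrac{1}{2}+s+iT)Z(W_{\varphi},\tfrac{1}{2}+s+iT) \sim C_{T}T^{-1/2}\gamma(\pi,\tfrac{1}{2}+s+iT)(T^{3/2}/(2\pi))^{s}$, expanding $L(\widetilde{\pi},\tfrac{1}{2}-s-iT) = \sum_{m} a(m,1)\, m^{s+iT-1/2}$, and interchanging sum with integral, the inner Mellin integral is recognisably $G_{T}(T^{3/2}/(2\pi m))$ by the very definition of $G_{T}$. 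Invoking the Hecke--Maass self-conjugacy identity $a(m,1) = \overline{a(1,m)}$ and complex conjugating the whole sum converts this into an expression whose coefficients are those of $\varphi^{1}$. A smooth dyadic partition in $m$ via $g(T^{3/2}/(2\pi m Y))$, followed by Lemma \ref{Integral to Sum and Sum to Integral Transformation} applied in reverse with $f_{0} = h_{2}$ and $f_{0} = h_{3}$, then identifies the resulting sum with $T^{\varepsilon}\sup_{Y}(|S_{h_{2}}(Y)| + |S_{h_{3}}(Y)|)$ modulo negligible errors.

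Along the way I will verify the derivative estimates $z^{j}h_{m}^{(j)}(z) \ll_{j} 1$ required to apply Lemma \ref{Integral to Sum and Sum to Integral Transformation}. For $h_{1}$ they are immediate from the construction of $h$. For $h_{2},h_{3}$ I would shift the $s$-contour defining $G_{T}$ to the imaginary axis --- no poles of $\gamma$ are crossed, since Theorem \ref{local functional equation} guarantees pole-freedom for $\re s \le 0$ --- and then use that $F(-s)$ decays faster than any polynomial in $|\im s|$, while $|\gamma|$ and the polynomial factors in $s$ produced by differentiating in $z$ are of controlled size on the critical line. The hardest step will be the reflection: carefully orchestrating the functional equation, the stationary-phase approximation, the Hecke self-conjugacy, and the contour bookkeeping so that the final sum emerges with exactly the Fourier--Whittaker coefficients $a(1,n)$ of $\varphi^{1}$ (as required by $S_{f_{0}}$) rather than the dual coefficients $a(n,1)$ that appear most naturally once the $L$-functional equation is invoked, and so that the gamma factor and the stationary-phase constant $C_{T}$ combine precisely into the kernel $G_{T}$ prescribed by the theorem.
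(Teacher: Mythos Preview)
Your proposal is correct and lands on the same endgame as the paper, but the route you take for the reflection of $I_{0}$ differs in one structural respect. The paper does not pass through Theorem~\ref{the relation} and the global functional equation of $L(\pi,s)$; instead it changes variables $y\mapsto 1/y$ in $I_{0}$, invokes the $\GL_{3}(\R)$ projection identity (Theorem~\ref{projection identity}) to rewrite $\varphi^{1}(a(1/y))$ as an $x$-integral of $\widetilde{\varphi}^{1}$, inserts the dual Fourier--Whittaker expansion \eqref{Dual form Whittaker expansion} to produce the coefficients $a(n,1)$ together with $\widetilde{Z}(\widetilde{W}_{\varphi},\tfrac{1}{2}-s-iT)$ directly, and only then applies the \emph{local} functional equation (Theorem~\ref{local functional equation}) to bring in $\gamma\cdot Z(W_{\varphi},\tfrac{1}{2}+s+iT)$. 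The paper also keeps $Z(W_{\varphi})$ in integral form, so that $G_{T}$ first appears as $G_{T}\bigl(T^{3}/(4\pi^{2}ny)\bigr)$ inside a $y$-integral, and only after the dyadic subdivision is the first identity of Lemma~\ref{the shape of the local zeta integral and its truncation} used to collapse that integral to $C_{T}T^{-1/2}h_{m}(T^{3/2}/(2\pi n))$; your early stationary-phase step compresses these two moves into one. Your path is arguably more streamlined, since it bypasses the projection identity and the dual expansion, but to make it rigorous you should (i) start with $c>1/2$ so that the Dirichlet series for $L(\widetilde{\pi},\tfrac{1}{2}-s-iT)$ converges absolutely before interchanging, and (ii) either supply a stationary-phase asymptotic for $Z(W_{\varphi},\tfrac{1}{2}+s+iT)$ that is uniform on the portion of the contour where $F(-s)$ is non-negligible (the paper only records it for $s=0$), or else defer the stationary phase as the paper does and let $G_{T}$ arise with the argument $T^{3}/(4\pi^{2}ny)$ first.
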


\begin{proof}
Let $k(y):=h\left(\frac{y}{T^{\kappa}}\right)-h\left(\frac{y}{T^{-\kappa}}\right)$. By making a smooth dyadic subdivision of the truncated global zeta integral, we get by the geometric approximate functional equation that
\begin{displaymath}
\begin{split}
Z(\varphi,\tfrac{1}{2}+iT)&\ll T^{\varepsilon}\sup_{T^{-\kappa}\leq Y\leq T^{\kappa}}\left\{\int_{\R_{+}^{\times}}\varphi^{1}(a(y))k(y)g\left(\frac{y}{Y}\right)y^{iT-1/2}d^{\times}y\right\}+O\left(T^{1/4-\kappa/2+\varepsilon}\right).
\end{split}
\end{displaymath}
The error term $O\left(T^{1/4-\kappa/2+\varepsilon}\right)$ in this expression comes from the geometric approximate functional equation in Theorem \ref{The geometric approximate functional equation for GL_3(R)}.\\
The idea to truncate the $Y$-range after a smooth dyadic subdivision further to $T^{-\varepsilon}\leq Y\leq T^{\kappa}$ and to project the contribution in the supremum coming from the terms with $T^{-\kappa}\leq Y\leq T^{-\varepsilon}$ onto the terms with $T^{-\varepsilon}\leq Y\leq T^{\kappa}$ by using the local functional equation from Theorem \ref{local functional equation} was proposed to us by our advisor. This is also the strategy which we follow below.\\
Define the integrals
\begin{displaymath}
\begin{split}
I_{m}:&=\int_{\R_{+}^{\times}}\varphi^{1}(a(y))h_{m}(y)y^{iT-1/2}d^{\times}y\;\;\;\;\text{for $m=0,1,2,3$}.
\end{split}
\end{displaymath}
We start with the observation that
\begin{displaymath}
\begin{split}
\int_{\R_{+}^{\times}}\varphi^{1}(a(y))k(y)y^{iT-1/2}d^{\times}y&=I_{0}+I_{1}
\end{split}
\end{displaymath}
and make the change of variables $y\leftrightarrow\tfrac{1}{y}$ in the first integral $I_{0}$ to get that
\begin{displaymath}
\begin{split}
I_{0}&=\int_{y\in\R_{+}^{\times}}\varphi^{1}(a(1/y))h_{0}(1/y)y^{1/2-iT}d^{\times}y.
\end{split}
\end{displaymath}
Using the $\GL_{3}(\R)$ projection identity from Theorem \ref{projection identity}, the above transforms to
\begin{displaymath}
\begin{split}
I_{0}&=\int_{y\in\R_{+}^{\times}}\int_{x\in\R}\widetilde{\varphi}^{1}\left[\left(\begin{matrix}y&0&0\\xy&1&0\\0&0&1\end{matrix}\right)\cdot w'\right]h_{0}(1/y)y^{1/2-iT}dxd^{\times}y.
\end{split}
\end{displaymath}
Changing variables and using the Fourier-Whittaker expansion of $\widetilde{\varphi}^{1}(g)$ from \eqref{Dual form Whittaker expansion}, we obtain
\begin{displaymath}
\begin{split}
I_{0}&=\sum_{\substack{n=-\infty\\n\neq0}}^{\infty}\frac{a(|n|,1)}{|n|}\int_{y\in\R_{+}^{\times}}\int_{x\in\R}\widetilde{W}_{\varphi}\left[\left(\begin{matrix}ny&0&0\\x&1&0\\0&0&1\end{matrix}\right)\cdot w'\right]h_{0}(1/y)y^{-1/2-iT}dxd^{\times}y.
\end{split}
\end{displaymath}
Splitting the sum over $n$ into positive and negative contributions and changing variables again, we get that
\begin{displaymath}
\begin{split}
I_{0}&=\sum_{n=1}^{\infty}\frac{a(n,1)}{n}\int_{y\in\R^{\times}}\int_{x\in\R}\widetilde{W}_{\varphi}\left[\left(\begin{matrix}ny&0&0\\x&1&0\\0&0&1\end{matrix}\right)\cdot w'\right]h_{0}(1/|y|)|y|^{-1/2-iT}dxd^{\times}y\\
&=\sum_{n=1}^{\infty}\frac{a(n,1)}{n^{1/2-iT}}\int_{y\in\R^{\times}}\int_{x\in\R}\widetilde{W}_{\varphi}\left[\left(\begin{matrix}y&0&0\\x&1&0\\0&0&1\end{matrix}\right)\cdot w'\right]h_{0}(n/|y|)|y|^{-1/2-iT}dxd^{\times}y.
\end{split}
\end{displaymath}
Substituting into this expression the inverse Mellin transformation formula
\begin{displaymath}
\begin{split}
h_{0}\left(\frac{n}{|y|}\right)&=\frac{1}{2\pi i}\int_{-c-i\infty}^{-c+i\infty}F(-s)n^{s}|y|^{-s}ds\;\;\text{for any real number $c\in\R$}
\end{split}
\end{displaymath}
for the function $h_{0}\left(\frac{n}{|y|}\right)$ with $c\geq0$, we obtain
\begin{displaymath}
\begin{split}
I_{0}&=\sum_{n=1}^{\infty}\frac{a(n,1)}{n^{1/2-iT}}\frac{1}{2\pi i}\int_{-c-i\infty}^{-c+i\infty}F(-s)n^{s}\widetilde{Z}(\widetilde{W}_{\varphi},\tfrac{1}{2}-s-iT)ds.
\end{split}
\end{displaymath}
Using Theorem \ref{local functional equation} and interchanging the order of integration, we get that
\begin{displaymath}
\begin{split}
I_{0}&=\sum_{n=1}^{\infty}\frac{a(n,1)}{n^{1/2-iT}}\frac{1}{2\pi i}\int_{-c-i\infty}^{-c+i\infty}F(-s)n^{s}\gamma(\pi,\tfrac{1}{2}+s+iT)Z(W_{\varphi},\tfrac{1}{2}+s+iT)ds\\
&=\sum_{n=1}^{\infty}\frac{a(n,1)}{n^{1/2-iT}}\int_{y\in\R_{+}^{\times}}W_{\varphi}(a(y))G_{T}\hspace{-0.08cm}\left(\frac{T^{3}}{4\pi^{2}ny}\right)y^{iT-1/2}d^{\times}y,
\end{split}
\end{displaymath}
where the function $G_{T}(z)$ is defined as in the statement of Theorem \ref{approximate equation} such that
\begin{displaymath}
\begin{split}
G_{T}\hspace{-0.08cm}\left(\frac{T^{3}}{4\pi^{2}ny}\right)&=\frac{1}{2\pi i}\int_{-c-i\infty}^{-c+i\infty}F(-s)n^{s}\gamma(\pi,\tfrac{1}{2}+s+iT)y^{s}ds.
\end{split}
\end{displaymath}
We defined $G_{T}(z)$ such that the above identity holds and this will make the final formula as simple as possible. The condition $c\geq0$ is necessary and ensures that the above sums for $I_{0}$ converge absolutely, because $G_{T}(z)\ll T^{-N}$ for all $N\in\mathbb{N}$ and $z\in[0,T^{-\varepsilon}]$ by the remarks below.\\
We have for $m=2,3$ and $z\in[0,T^{-\varepsilon}]$ that $h_{m}(z)\ll T^{-N}$ for all $N\in\mathbb{N}$, which can be seen by shifting the contour of the definition for $G_{T}(z)$ towards minus infinity and using the fact that $\gamma(\pi,\tfrac{1}{2}+s+iT)\ll\left(1+\frac{|s|}{T}\right)^{-3s}T^{-3s}$ in each strip $\left\{s\in\C:-\infty<\re(s)\leq0\right\}$ \cite[\text{(5.115)}]{6}, \cite[\text{(1.20)}]{18}. Moreover, shifting the contour to $0\pm i\infty$, we get that $|h_{m}(z)|\ll C:=\int_{-i\infty}^{i\infty}|F(-s)|ds<\infty$ for $m=2,3$ and all $z\in\R_{+}^{\times}$, because there are no poles of $\gamma(\pi,\tfrac{1}{2}+s+iT)$ for $s\in\C$ with $\re(s)\leq0$. Similarly, it follows also that $z^{j}G_{T}^{(j)}(z)\ll_{j}1$ and that $z^{j}h_{m}^{(j)}(z)\ll_{j}1$ for all $j\geq0$ and $m=2,3$.\\
By the local functional equation from Theorem \ref{local functional equation} and equation \eqref{local zeta integral bound}, we obtain that
\begin{displaymath}
\begin{split}
\widetilde{Z}(\widetilde{W}_{\varphi},\tfrac{1}{2}-s-iT)&\ll T^{-3s/2-1/2}.
\end{split}
\end{displaymath}
Shifting the contour from the definition of $G_{T}(z)$ to $0\pm i\infty$ and using Theorem \ref{local functional equation} to transform back the integral such that $\widetilde{Z}(\widetilde{W}_{\varphi},\tfrac{1}{2}-s-iT)$ reappears in it, we therefore see that
\begin{equation}\label{Whittaker function integral bound}
\begin{split}
\int_{y\in\R_{+}^{\times}}W_{\varphi}(a(y))G_{T}\hspace{-0.08cm}\left(\frac{T^{3}}{4\pi^{2}ny}\right)y^{iT-1/2}d^{\times}y&=\frac{1}{2\pi i}\int_{-i\infty}^{i\infty}F(-s)n^{s}\widetilde{Z}(\widetilde{W}_{\varphi},\tfrac{1}{2}-s-iT)ds\ll T^{-1/2}.
\end{split}
\end{equation}
Defining the two sums
\begin{displaymath}
\begin{split}
S_{m}:&=\sum_{n=1}^{\infty}\frac{a(n,1)}{n^{1/2-iT}}\int_{y\in\R_{+}^{\times}}W_{\varphi}(a(y))h_{m}\left(\frac{T^{3}}{4\pi^{2}ny}\right)y^{iT-1/2}d^{\times}y\;\;\;\;\text{for $m=2,3$},
\end{split}
\end{displaymath}
we get that
\begin{displaymath}
\begin{split}
I_{0}&=S_{2}+iS_{3}.
\end{split}
\end{displaymath}
By a smooth dyadic subdivision with the function $g(y)$, we get by using the above remarks on $h_{m}(z)$ for $m=2,3$ and the construction of $W_{\varphi}(a(y))$ in \S\ref{suitable Whittaker function} that
\begin{displaymath}
\begin{split}
S_{m}&\ll T^{\varepsilon}\sup_{T^{-\varepsilon}\leq Y\leq T^{3/2}}\left\{\left|D_{Y}\right|\right\}+O(T^{-A})\;\;\text{with $A>0$},
\end{split}
\end{displaymath}
where
\begin{displaymath}
\begin{split}
D_{Y}:&=\sum_{n=1}^{\infty}\frac{a(n,1)}{n^{1/2-iT}}g\left(\frac{T^{3/2}}{2\pi nY}\right)\int_{y\in\R_{+}^{\times}}W_{\varphi}(a(y))h_{m}\left(\frac{T^{3}}{4\pi^{2}ny}\right)y^{iT-1/2}d^{\times}y.
\end{split}
\end{displaymath}
The above bound for $S_{m}$ with $m=2,3$ can, by the Rankin-Selberg bound $\sum_{n\leq X}|a(n,1)|\ll X^{1+\varepsilon}$ \cite[\text{(4)}]{13} and because of \eqref{Whittaker function integral bound}, be further simplified to
\begin{displaymath}
\begin{split}
S_{m}&\ll T^{\varepsilon}\sup_{T^{-\varepsilon}\leq Y\leq T^{\kappa}}\left\{\left|D_{Y}\right|\right\}+O(T^{1/4-\kappa/2+\varepsilon}).
\end{split}
\end{displaymath}
Using the first identity of Lemma \ref{the shape of the local zeta integral and its truncation} with $f(y):=h_{m}(y)$, the above two suprema for $m=2,3$ are seen to be bounded by
\begin{displaymath}
\begin{split}
S_{m}&\ll T^{\varepsilon}\sup_{T^{-\varepsilon}\leq Y\leq T^{\kappa}}\left\{\left|C_{T}\cdot T^{-1/2}\sum_{n=1}^{\infty}\frac{a(n,1)}{n^{1/2-iT}}g\left(\frac{T^{3/2}}{2\pi nY}\right)h_{m}\left(\frac{T^{3/2}}{2\pi n}\right)\right|\right\}+O(T^{1/4-\kappa/2+\varepsilon}),
\end{split}
\end{displaymath}
because the error term $O(T^{1/4-\kappa/2+\varepsilon})$ dominates the other error term $O(T^{-3/4+\varepsilon})$ coming from Lemma \ref{the shape of the local zeta integral and its truncation}.\\
Moreover, by using that $a(n,1)=\overline{a(1,n)}$, $n^{1/2-iT}=\overline{n^{1/2+iT}}$ and that the $h_{m}(y)$'s, as well as $g(y)$ are real valued functions, we conclude that for $m=2,3$
\begin{displaymath}
\begin{split}
\overline{S_{m}}&\ll T^{\varepsilon}\sup_{T^{-\varepsilon}\leq Y\leq T^{\kappa}}\left\{\left|\overline{C_{T}}\cdot T^{-1/2}\sum_{n=1}^{\infty}\frac{a(1,n)}{n^{1/2+iT}}h_{m}\left(\frac{T^{3/2}}{2\pi n}\right)g\left(\frac{T^{3/2}}{2\pi nY}\right)\right|\right\}+O(T^{1/4-\kappa/2+\varepsilon}).
\end{split}
\end{displaymath}
From this expression, we conclude via Lemma \ref{Integral to Sum and Sum to Integral Transformation} and $f_{0}(y):=h_{m}(y)$ that for $m=2,3$, we have
\begin{displaymath}
\begin{split}
S_{m}&\ll T^{\varepsilon}\sup_{T^{-\varepsilon}\leq Y\leq T^{\kappa}}\left\{\left|\int_{\R_{+}^{\times}}\varphi^{1}(a(y))h_{m}(y)g\left(\frac{y}{Y}\right)y^{iT-1/2}d^{\times}y\right|\right\}+O(T^{1/4-\kappa/2+\varepsilon})
\end{split}
\end{displaymath}
to conclude finally by a smooth dyadic subdivision of the integral $I_{1}$ with the same function $g(y)$ that
\begin{displaymath}
\begin{split}
Z(\varphi,\tfrac{1}{2}+iT)&\ll T^{\varepsilon}\sup_{T^{-\varepsilon}\leq Y\leq T^{\kappa}}\left\{\left|\int_{\R_{+}^{\times}}\varphi^{1}(a(y))h_{1}(y)g\left(\frac{y}{Y}\right)y^{iT-1/2}d^{\times}y\right|\right\}+O(T^{1/4-\kappa/2+\varepsilon})\\
&\quad+T^{\varepsilon}\sum_{f_{0}\in\left\{h_{2},h_{3}\right\}}\sup_{T^{-\varepsilon}\leq Y\leq T^{\kappa}}\left\{\left|\int_{\R_{+}^{\times}}\varphi^{1}(a(y))f_{0}(y)g\left(\frac{y}{Y}\right)y^{iT-1/2}d^{\times}y\right|\right\}.
\end{split}
\end{displaymath}
This is the claimed formula.
\end{proof}

\begin{remark}Taking the supremum over the full range $T^{-\kappa}\leq Y\leq T^{\kappa}$ in Theorem \ref{approximate equation} is also enough to obtain a subconvex bound, because we would get a saving of $\frac{1}{60}$. Shrinking the range to $T^{-\varepsilon}\leq Y\leq T^{\kappa}$ has only the effect of optimizing the saving to Lin's $\frac{1}{36}$ \cite{13}.
\end{remark}

\section{Subconvexity for $\GL_{3}(\R)$ $L$-Functions via Integral Representations}
\label{Subconvexity for GL_{3}(R) L-Functions via Integral Representations}

Let $W_{\varphi}$ be the Whittaker function constructed in \S\ref{suitable Whittaker function} and $\varphi\in\pi$ the corresponding automorphic form.

\subsection{The Analysis of the Integral $S_{f_{0}}(Y)$}\label{integral analysis}

Let $T^{-\varepsilon}\leq Y\leq T^{\kappa}$ and let $f_{0}\in\left\{h_{1},h_{2},h_{3}\right\}$.\\
We have by the definition of $S_{f_{0}}(Y)$ and the Fourier-Whittaker expansion of $\varphi^{1}(a(y))$ that
\begin{displaymath}
\begin{split}
S_{f_{0}}(Y)&=\int_{\R_{+}^{\times}}\varphi^{1}(a(y))f_{0}(y)g\left(\frac{y}{Y}\right)y^{iT-1/2}d^{\times}y\\
&=\sum_{\substack{n=-\infty\\n\neq0}}^{\infty}\frac{a(1,|n|)}{|n|}\int_{\R_{+}^{\times}}W_{\varphi}(a(ny))f_{0}(y)g\left(\frac{y}{Y}\right)y^{iT-1/2}d^{\times}y.
\end{split}
\end{displaymath}
Setting into this expression the definition of the suitable Whittaker function $W_{\varphi}$ in \S\ref{suitable Whittaker function}, we deduce that
\begin{displaymath}
\begin{split}
S_{f_{0}}(Y)&=T^{3/4}\sum_{n=1}^{\infty}\frac{a(1,n)}{n}\int_{0}^{\infty}e\left(-\frac{ny}{\sqrt{T}}\right)y^{iT}V_{0}\left(\frac{ny}{T^{3/2}}\right)g\left(\frac{y}{Y}\right)\frac{f_{0}(y)}{\sqrt{y}}d^{\times}y.
\end{split}
\end{displaymath}
After performing the change of variables $y\leftrightarrow Yy$, we get
\begin{displaymath}
\begin{split}
S_{f_{0}}(Y)&=\frac{T^{3/4}}{Y^{1/2-iT}}\sum_{n=1}^{\infty}\frac{a(1,n)}{n}\int_{0}^{\infty}e\left(-\frac{nYy}{\sqrt{T}}\right)y^{iT}V_{0}\left(\frac{nYy}{T^{3/2}}\right)g(y)\frac{f_{0}(Yy)}{\sqrt{y}}d^{\times}y.
\end{split}
\end{displaymath}
Define the variable $N:=T^{3/2}/Y$, set $S(N):=S_{f_{0}}(Y)$ and let $f_{n}(y):=V_{0}\left(\frac{ny}{N}\right)g(y)\frac{f_{0}(Yy)}{y\sqrt{y}}$ for $n\geq1$ to get that
\begin{displaymath}
\begin{split}
S(N)&\ll\sqrt{N}\sum_{n=1}^{\infty}\frac{a(1,n)}{n}\int_{0}^{\infty}y^{iT}e\left(-\frac{nTy}{N}\right)f_{n}(y)dy.
\end{split}
\end{displaymath}
After the change of variables $y:=\frac{1}{x}$ and the definition $V_{n}(x):=\frac{f_{n}(\frac{1}{x})}{x^{2}}=V_{0}\left(\frac{n}{Nx}\right)g\left(\frac{1}{x}\right)f_{0}\left(\frac{Y}{x}\right)\frac{1}{\sqrt{x}}$, this is equal to
\begin{displaymath}
\begin{split}
S(N)&\ll\sqrt{N}\sum_{n=1}^{\infty}\frac{a(1,n)}{n}\int_{\R_{+}^{\times}}x^{-iT}e\left(-\frac{nT}{Nx}\right)V_{n}(x)dx.
\end{split}
\end{displaymath}
Set $V(x):=V_{0}\left(\frac{1}{x}\right)f_{0}\left(\frac{Y}{x}\right)\frac{1}{\sqrt{x}}$ and note that $V^{(j)}(x)\ll_{j}1$ for all $j\geq0$. By the stationary phase method \cite[\text{Lemma 5}]{16} and the bound $V_{n}^{(j)}(x)\ll_{j}1$ for all $j\geq0$ there exists a smooth and compactly supported function, for example $w_{0}(z):=\frac{V_{0}\left(\frac{1}{2\pi}\right)}{V_{0}\left(\frac{1}{2\pi z}\right)}g\left(\frac{1}{2\pi z}\right)\in C_{c}^{\infty}([1,2])$, such that
\begin{equation}\label{stationary phase relation}
\begin{split}
\int_{\R_{+}^{\times}}x^{-iT}e\left(-\frac{nT}{Nx}\right)V_{n}(x)dx
&=w_{0}\left(\frac{n}{N}\right)\int_{\R_{+}^{\times}}x^{-iT}e\left(-\frac{nT}{Nx}\right)V(x)dx+O(T^{-3/2}),
\end{split}
\end{equation}
because we have
\begin{displaymath}
\begin{split}
\int_{\R_{+}^{\times}}x^{-iT}e\left(-\frac{nT}{Nx}\right)V_{n}(x)dx
&=c_{T}\cdot T^{-1/2}V_{0}\left(\frac{1}{2\pi}\right)g\left(\frac{N}{2\pi n}\right)f_{0}\left(\frac{NY}{2\pi n}\right)\sqrt\frac{N}{2\pi n}+O(T^{-3/2}),\\
\int_{\R_{+}^{\times}}x^{-iT}e\left(-\frac{nT}{Nx}\right)V(x)dx&=c_{T}\cdot T^{-1/2}V_{0}\left(\frac{N}{2\pi n}\right)f_{0}\left(\frac{NY}{2\pi n}\right)\sqrt\frac{N}{2\pi n}+O(T^{-3/2}),
\end{split}
\end{displaymath}
where $c_{T}\in\C$ is given by
\begin{displaymath}
\begin{split}
c_{T}:=\sqrt{2\pi}e^{-\frac{\pi i}{4}}e\left(-\frac{T}{2\pi}\right)\left(\frac{2\pi n}{N}\right)^{1-iT}.
\end{split}
\end{displaymath}
In the above two asymptotic formulas, the implied constant of the respective error term is uniform, because it depends continuously on $\tfrac{n}{N}$, which varies in the compact set $[1,2]$.\\
This implies that
\begin{displaymath}
\begin{split}
S(N)&\ll\sqrt{N}\sum_{n=1}^{\infty}\frac{a(1,n)}{n}w_{0}\left(\frac{n}{N}\right)\int_{\R_{+}^{\times}}x^{-iT}e\left(-\frac{nT}{Nx}\right)V(x)dx.
\end{split}
\end{displaymath}
Absorbing the fraction $\frac{1}{n}$ into the weight function $w_{0}$ by defining $w(x):=\frac{w_{0}(x)}{x}\in C_{c}^{\infty}([1,2])$, we deduce that
\begin{displaymath}
\begin{split}
S(N)&\ll\frac{1}{\sqrt{N}}\sum_{n=1}^{\infty}a(1,n)w\left(\frac{n}{N}\right)\int_{\R_{+}^{\times}}x^{-iT}e\left(-\frac{nT}{Nx}\right)V(x)dx.
\end{split}
\end{displaymath}
We have therefore to study the main integral
\begin{displaymath}
\begin{split}
\int_{\R_{+}^{\times}}x^{-iT}e\left(-\frac{nT}{Nx}\right)V(x)dx,
\end{split}
\end{displaymath}
which is exactly the integral appearing in \cite[\text{pp.\;1908--1909}]{13} in the proof of Lin's key identity. This key observation tells us that Lin's key identity can be understood as replacing the above integral $\int_{\R_{+}^{\times}}x^{-iT}e\left(-\frac{nT}{Nx}\right)V(x)dx$ by the corresponding Riemann lattice sum plus its oscillations (error terms) around the exact value of the integral. This also explains why there are two terms, namely $\mathcal{F}$ and $\mathcal{O}$ present in this method. We will follow closely the work \cite{13} in the end of our argument.

\subsection{The Discretization and Amplification of the Main Integral}\label{The Discretization and Amplification of the Main Integral}

We use the letters $p$ and $\ell$ to denote prime numbers. Let $P$ and $L$ be two large parameters, which will be specified later as small powers of the parameter $T\in\R^{+}$. The notations $p\sim P$ and $\ell\sim L$ are used to denote prime numbers in the two dyadic segments $[P,2P]$ and $[L,2L]$ respectively. We also assume that $[P,2P]\cap[L,2L]=\emptyset$. The sums $\sum_{p\sim P}$ and $\sum_{\ell\sim L}$ describe sums over all the prime numbers $p\in[P,2P]$ and $\ell\in[L,2L]$.\\
We have the following
\begin{lemma}\label{keyidentity}(Another form of Lin's key identity)\;\cite{13}\\
We have
\begin{displaymath}
\begin{split}
\int_{\R_{+}^{\times}}x^{-iT}e\left(-\frac{nT}{Nx}\right)V(x)dx&=\left(\frac{\ell T}{Np}\right)^{1-iT}\sum_{r=1}^{\infty}r^{-iT}e\left(-\frac{np}{\ell r}\right)V\left(\frac{r}{Np/\ell T}\right)-\sum_{\substack{r\in\Z\\r\neq0}}\mathcal{J}_{iT}\left(n,\frac{rp}{\ell}\right)
\end{split}
\end{displaymath}
with
\begin{displaymath}
\begin{split}
\mathcal{J}_{iT}\left(n,\frac{rp}{\ell}\right):&=\int_{\R_{+}^{\times}}x^{-iT}e\left(-\frac{nT}{Nx}\right)V(x)e\left(-\frac{rNp}{\ell T}x\right)dx.
\end{split}
\end{displaymath}
\end{lemma}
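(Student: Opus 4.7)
The strategy is to apply the Poisson summation formula to a Riemann-style lattice sum built from the integrand, following the outline sketched in the introduction. Specifically, define $F:\R\to\C$ by $F(x):=x^{-iT}e(-nT/(Nx))V(x)$ for $x>0$ and $F(x):=0$ otherwise. Since $V\in C_c^\infty(\R_+^\times)$, the function $F$ lies in $C_c^\infty(\R)$, and both sides of the Poisson summation formula for $F$ converge absolutely. Set the spacing parameter to be $h:=\ell T/(Np)>0$; this choice is forced by matching the right-hand side of the lemma and makes the final identification transparent.

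Applying the Poisson summation formula to $x\mapsto F(hx)$ produces
\[
h\sum_{r\in\Z}F(hr)=\sum_{r\in\Z}\widehat{F}\!\left(\frac{r}{h}\right),
\]
where $\widehat{F}(\xi):=\int_{\R}F(x)e(-x\xi)\,dx$. Because $F$ is supported in $\R_+^\times$, one has $\widehat{F}(0)=M$, the integral on the left-hand side of the lemma. Isolating the $r=0$ term on the right then gives
\[
M=h\sum_{r=1}^{\infty}F(hr)-\sum_{\substack{r\in\Z\\r\neq 0}}\widehat{F}\!\left(\frac{r}{h}\right),
\]
the sum over $\Z$ collapsing to $r\geq1$ by the support of $F$.

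It remains to rewrite each piece using $h=\ell T/(Np)$. Direct substitution gives $h(hr)^{-iT}=(\ell T/(Np))^{1-iT}r^{-iT}$, $nT/(Nhr)=np/(\ell r)$, and $hr=r/(Np/(\ell T))$, so that $h\sum_{r\geq 1}F(hr)$ reproduces the first sum of the statement. Similarly, with $r/h=rNp/(\ell T)$, one has $\widehat{F}(r/h)=\int_{\R_+^\times}x^{-iT}e(-nT/(Nx))V(x)e(-(rNp/(\ell T))x)\,dx=\mathcal{J}_{iT}(n,rp/\ell)$, matching the error sum term for term. There is no real analytic obstacle here: since $F\in C_c^\infty(\R)$, the Poisson summation step is classical, and the only remaining work is the bookkeeping of the constants introduced by the choice of $h$, which by design aligns with the parametrization in the statement.
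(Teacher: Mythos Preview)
Your proof is correct and follows essentially the same approach as the paper: both apply the Poisson summation formula to the function $x\mapsto x^{-iT}e(-nT/(Nx))V(x)$ sampled on the lattice $h\Z$ with $h=\ell T/(Np)$, then isolate the $r=0$ Fourier term to recover the main integral. The paper carries out the same computation by first writing down the Riemann sum, applying Poisson, and then performing the change of variables $z=(Np/\ell T)x$, whereas you build the scaling $h$ into the function from the start; the difference is purely organizational.
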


\begin{proof}
We can calculate using the Poisson summation formula that
\begin{displaymath}
\begin{split}
\sum_{r=1}^{\infty}r^{-iT}e\left(-\frac{np}{\ell r}\right)V\left(\frac{r}{Np/\ell T}\right)&=\int_{\R_{+}^{\times}}z^{-iT}e\left(-\frac{np}{\ell z}\right)V\left(\frac{z}{Np/\ell T}\right)dz\\
&\quad+\sum_{\substack{r\in\Z\\r\neq0}}\int_{\R_{+}^{\times}}z^{-iT}e\left(-\frac{np}{\ell z}\right)V\left(\frac{z}{Np/\ell T}\right)e(-rz)dz.
\end{split}
\end{displaymath}
Making in the above expression the change of variables $z:=\frac{Np}{\ell T}x$, we deduce that
\begin{displaymath}
\begin{split}
\sum_{r=1}^{\infty}r^{-iT}e\left(-\frac{np}{\ell r}\right)V\left(\frac{r}{Np/\ell T}\right)&=\left(\frac{Np}{\ell T}\right)^{1-iT}\int_{\R_{+}^{\times}}x^{-iT}e\left(-\frac{nT}{Nx}\right)V(x)dx\\
&\quad+\left(\frac{Np}{\ell T}\right)^{1-iT}\sum_{\substack{r\in\Z\\r\neq0}}\int_{\R_{+}^{\times}}x^{-iT}e\left(-\frac{nT}{Nx}\right)V(x)e\left(-\frac{rNp}{\ell T}x\right)dx.
\end{split}
\end{displaymath}
Solving the above expression for $\int_{\R_{+}^{\times}}x^{-iT}e\left(-\frac{nT}{Nx}\right)V(x)dx$ implies the claimed identity.
\end{proof}

\noindent This implies the following
\begin{lemma}\label{amplifiedkeyidentity}(The amplified key identity)\;\cite{13}\\
We have
\begin{displaymath}
\begin{split}
\int_{\R_{+}^{\times}}x^{-iT}e\left(-\frac{nT}{Nx}\right)V(x)dx&\asymp\frac{T^{1+\varepsilon}}{NP^{2}}\sum_{p\sim P}p^{iT}\sum_{\ell\sim L}\ell^{-iT}\sum_{r=1}^{\infty}r^{-iT}e\left(-\frac{np}{\ell r}\right)V\left(\frac{r}{Np/\ell T}\right)\\
&\quad-\frac{T^{\varepsilon}}{PL}\sum_{p\sim P}\sum_{\ell\sim L}\sum_{\substack{r\in\Z\\r\neq0}}\mathcal{J}_{iT}\left(n,\frac{rp}{\ell}\right).
\end{split}
\end{displaymath}
\end{lemma}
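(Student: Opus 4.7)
The plan is to derive this from the basic key identity of Lemma \ref{keyidentity} by averaging it over the primes $p \sim P$ and $\ell \sim L$. The observation driving the argument is that the left-hand side $I := \int_{\R_+^{\times}} x^{-iT} e(-nT/(Nx)) V(x)\,dx$ is independent of the auxiliary pair $(p,\ell)$, whereas the right-hand side of Lemma \ref{keyidentity} genuinely depends on those primes; summing the identity over all prime pairs and dividing by the total count therefore leaves $I$ unchanged while converting the right side into the desired amplified form.

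Concretely, I would rewrite Lemma \ref{keyidentity} in the compact shape $I = (\ell T/Np)^{1-iT}\,\Sigma(p,\ell) - \mathcal{E}(p,\ell)$, where $\Sigma(p,\ell)$ is the $r$-sum and $\mathcal{E}(p,\ell)$ is the $\mathcal{J}$-sum appearing in the amplified statement, then sum both sides over $p \sim P$ and $\ell \sim L$ and divide by $\#\{p \sim P\}\cdot\#\{\ell \sim L\}$. By the prime number theorem each factor is $\asymp P$ and $\asymp L$ in the paper's convention (the logarithmic losses being absorbed into $T^{\varepsilon}$), so this division multiplies the right-hand side by $T^{\varepsilon}/(PL)$. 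To expose the amplifier $p^{iT}\ell^{-iT}$ I would factor
\[
\left(\tfrac{\ell T}{Np}\right)^{1-iT} = \tfrac{\ell T}{Np} \cdot p^{iT}\ell^{-iT} \cdot \left(\tfrac{N}{T}\right)^{iT},
\]
where $(N/T)^{iT}$ is a unimodular constant independent of $(p,\ell)$ and $\ell T/(Np) \asymp LT/(NP)$ uniformly for $p \sim P$, $\ell \sim L$. Combining with the prefactor yields $\tfrac{T^{\varepsilon}}{PL}\cdot\tfrac{LT}{NP} = \tfrac{T^{1+\varepsilon}}{NP^{2}}$ in front of the amplified main sum and $T^{\varepsilon}/(PL)$ in front of the $\mathcal{E}$-sum, matching the statement line-for-line.

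The only point requiring care is the interpretation of the $\asymp$-symbol for the complex quantities involved: it must be read as equality up to unimodular phases and multiplicative factors of size $T^{\varepsilon}$. This absorbs (i) the unimodular constant $(N/T)^{iT}$, (ii) the fact that $\ell T/(Np)$ equals $LT/(NP)$ only up to bounded multiplicative constants rather than exactly, and (iii) the mild residual $(p,\ell)$-dependence of $\Sigma(p,\ell)$ through the smooth weight $V(r/(Np/\ell T))$, which merely restricts $r$ to a dyadic window of the expected size $Np/(\ell T)$. Once these conventions are fixed the identity essentially drops out of the averaging; the step I expect to require the most care in the write-up is precisely the verification that this loose notion of $\asymp$ is still strong enough to feed the subsequent amplification and Cauchy--Schwarz steps of \cite{13}, so that no genuine cancellation is lost when we pass from Lemma \ref{keyidentity} to Lemma \ref{amplifiedkeyidentity}.
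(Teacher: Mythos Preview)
Your proposal is correct and follows essentially the same route as the paper: both start from Lemma \ref{keyidentity}, average over the prime pairs $(p,\ell)$ using the prime number theorem identity $\frac{\log(P)\log(L)}{PL}\sum_{p\sim P}\sum_{\ell\sim L}1\asymp1$, and then read off the stated expression. Your write-up is in fact more explicit than the paper's about factoring $(\ell T/Np)^{1-iT}$ to extract $p^{iT}\ell^{-iT}$ and about what the $\asymp$ symbol is absorbing here; the paper simply asserts ``This is the claimed formula'' at the corresponding step.
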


\begin{proof}
Using the above Lemma \ref{keyidentity} and the identity
\begin{displaymath}
\begin{split}
\frac{\log(P)\log(L)}{PL}\sum_{p\sim P}\sum_{\ell\sim L}1&\asymp1,
\end{split}
\end{displaymath}
which is a direct consequence of the prime number theorem \cite{13}, we can calculate that
\begin{displaymath}
\begin{split}
\int_{\R_{+}^{\times}}x^{-iT}e\left(-\frac{nT}{Nx}\right)V(x)dx&\asymp\frac{\log(P)\log(L)}{PL}\sum_{p\sim P}\sum_{\ell\sim L}\int_{\R_{+}^{\times}}x^{-iT}e\left(-\frac{nT}{Nx}\right)V(x)dx\\
&=\frac{\log(P)\log(L)}{PL}\sum_{p\sim P}\sum_{\ell\sim L}\Bigg[\left(\frac{\ell T}{Np}\right)^{1-iT}\sum_{r=1}^{\infty}r^{-iT}e\left(-\frac{np}{\ell r}\right)V\left(\frac{r}{Np/\ell T}\right)\\
&\hspace{4.0cm}-\sum_{\substack{r\in\Z\\r\neq0}}\int_{\R_{+}^{\times}}x^{-iT}e\left(-\frac{nT}{Nx}\right)V(x)e\left(-\frac{rNp}{\ell T}x\right)dx\Bigg].
\end{split}
\end{displaymath}
This is the claimed formula.
\end{proof}

\subsection{The Final Bound for $L(\pi,\tfrac{1}{2}+iT)$}

Setting $\kappa:=\frac{1}{18}$ and the two variables $P$ and $L$ as in \cite{13} to
\begin{displaymath}
\begin{split}
P:&=T^{5/18}\;\;\text{and}\;\;L:=T^{1/9},
\end{split}
\end{displaymath}
we obtain using Theorem \ref{approximate equation} with $S_{f_{0}}(Y)=S(N)$ and following \cite{13} that
\begin{displaymath}
\begin{split}
Z(\varphi,\tfrac{1}{2}+iT)&\ll T^{\varepsilon}\sup_{T^{3/2-\kappa}\leq N\leq T^{3/2+\varepsilon}}\left\{\left|S(N)\right|\right\}+T^{1/4-\kappa/2+\varepsilon}\\
&\ll\left(\frac{T^{3/2+\varepsilon}P}{T^{3/2}L^{1/2}}+T^{3/8+\varepsilon}\left(\frac{PL}{T}\right)^{1/4}\right)+\left(\frac{T^{1/2+\varepsilon}}{P}+\frac{T^{1+\kappa/2+\varepsilon}L}{T^{3/4}P}\right)+T^{1/4-\kappa/2+\varepsilon}\\
&\ll T^{1/4-1/36+\varepsilon}.
\end{split}
\end{displaymath}
In this calculation, we have used that the above bound for $S(N)=S_{f_{0}}(Y)$ is independent of the function $f_{0}\in\left\{h_{1},h_{2},h_{3}\right\}$.\\
Finally, because it holds according to Lemma \ref{the shape of the local zeta integral and its truncation} that $Z(W_{\varphi},\tfrac{1}{2}+iT)=C_{T}\cdot T^{-1/2}+O(T^{-3/2})$, we have by Theorem \ref{the relation} that
\begin{displaymath}
\begin{split}
L(\pi,\tfrac{1}{2}+iT)&=\frac{Z(\varphi,\tfrac{1}{2}+iT)}{Z(W_{\varphi},\tfrac{1}{2}+iT)}\ll Z(\varphi,\tfrac{1}{2}+iT)T^{1/2}\ll T^{3/4-1/36+\varepsilon}.
\end{split}
\end{displaymath}
The saving $\frac{1}{36}$ is not the best currently known, because Munshi \cite{16} obtained a saving of $\frac{1}{16}$ and Aggarwal \cite{1} got a saving of $\frac{3}{40}$.

\section{Acknowledgement}
\label{Acknowledgement}

We would like to thank our advisor Professor Dr. Paul Nelson for entrusting us with this interesting project. He introduced us carefully to the representation theory of automorphic forms and the geometric approximate functional equation for $\GL_{2}(\R)$, explaining it in detail with all the proofs over $\R$. He also suggested to us a possible connection of the key identity with automorphic periods and representation theory. Beginning from a well chosen starting point, we gained very interesting insights into the key identity of the $\GL_{3}(\R)$-problem and the various findings of our research have become a unity.\\
Many thanks to Prof. Dr. Paul Nelson as well as to Prof. Dr. Roman Holowinsky, Prof. Dr. Matthew Young, Dr. Yongxiao Lin and the referee for their valuable suggestions which improved this paper.\\
\\
This work was supported by SNSF (Swiss National Science Foundation) under grant 169247.

\bibliography{mybibfile}

\end{document}